\theoremstyle{plain}
\newtheorem{theorem}{Theorem}
\newtheorem*{theorem*}{Theorem}
\newtheorem{lemma}{Lemma}
\newtheorem*{lemma*}{Lemma}
\theoremstyle{definition}
\newtheorem{definition}{Definition}
\newtheorem*{definition*}{Definition}
\theoremstyle{remark}
\newtheorem{remark}{Remark}
\newtheorem*{remark*}{Remark}
\newtheorem{statement}{Statement}
\newtheorem*{statement*}{Statement}
\begin{document}
\title[Singular distributions and fractals]{Certain singular distributions and fractals}
\author{Symon Serbenyuk}

\address{
  45~Shchukina St. \\
  Vinnytsia \\
  21012 \\
  Ukraine}
\email{simon6@ukr.net}

\subjclass[2010]{05D99, 11K55, 11J72, 28A80,  26A09}

\keywords{
Fractal;  Moran structure; Hausdorff dimension; continuous function; range of values of a function.}

\begin{abstract}
In the present article, the main attention is given to fractal sets  whose elements have certain restrictions on
using digits or combinations of digits in own nega-P-representation.  Topological, metric, and fractal properties of images of certain self-similar fractals under the action of some singular distributions, are investigated. 
\end{abstract}
\maketitle


\section{Introduction}

Nowadays, ``pathological" mathematical objects (the notion of "pathology" in mathematics is described in \cite{wiki-pathological}) such as fractals, functions with complicated (complex) local structure (i.e., singular, non-differentiable, or  nowhere  monotonic functions), and other mathematical objects,  have the applied importance and the interdisciplinary character. A number of researches are devoted to this topic (for example, see \cite{{Falconer1997}, {Falconer2004}, {Katsuura1991}}, \cite{wiki-fractal} - \cite{wiki-function}, etc.).

Fractals are widely applicated  in computer design,  quantum mechanics, solid-state physics, algorithms of the compression to information, analysis and categorizations of signals of  various forms appearing in different areas (e.g. the analysis of exchange rate fluctuations in economics),  etc. In addition, such sets are useful for checking  preserving the Hausdorff dimension by certain functions \cite{{S. Serbenyuk abstract1},{S.Serbenyuk 2017}}. 

In the present article, the main attention is given to fractals having the Moran structure. Fractal sets considering in this paper, are images of certain fractals under the map which is a some generalization of the Salem function. Thus, in the present research, metric, topological, and fractal properties of certain sets (images of certain fractals under the  some singular distribution) are investigated. Also,  some fractal properties of the considered  singular distribution are studied more detail. In other words, the differences  between fractal properties of the considered images and their corresponding preimages are described.

Let us describe the notion of the Moran structure. We will consider two definitions of the notion of Moran sets. The first definition was given by Moran in the paper~\cite{Moran1946}, and the second definition is of Hua et al.~(\cite{HRW2000}).

\begin{definition}{(Definition of  Moran).}
Let us consider space $\mathbb R^n$. In~\cite{Moran1946}, P.~A.~P.~Moran introduced the following construction of sets and calculated the Hausdorff dimension of the limit set 
\begin{equation}
\label{eq: Cantor-like set}
E=\bigcap^{\infty} _{n=1}{\bigcup_{i_1,\dots , i_n\in A_{0,p}}{\Delta_{i_1i_2\ldots i_n}}}. 
\end{equation}
 Here $p$ is a fixed positive integer, $A_{0,p}=\{1, 2, \dots , p\}$, and sets $\Delta_{i_1i_2\ldots i_n}$ are basic sets  having  the following properties:
\begin{itemize}
\item any set $\Delta_{i_1i_2\ldots i_n}$ is closed and disjoint;
\item for any $i\in A_{0,p}$, the condition $\Delta_{i_1i_2\ldots i_ni}\subset\Delta_{i_1i_2\ldots i_n}$ holds;
\item 
$$
\lim_{n\to\infty}{d\left(\Delta_{i_1i_2\ldots i_n}\right)}=0, \text{where $d(\cdot)$ is the diameter of a set};
$$
\item each basic set is the closure of its interior;
\item at each level the basic sets do not overlap (their interiors are disjoint);
\item any basic set $\Delta_{i_1i_2\ldots i_ni}$ is geometrically similar to $\Delta_{i_1i_2\ldots i_n}$;
\item
$$
\frac{d\left(\Delta_{i_1i_2\ldots i_ni}\right)}{d\left(\Delta_{i_1i_2\ldots i_n}\right)}=\sigma_i,
$$
where $\sigma_i\in (0,1)$ for $i=\overline{1,p}$.
\end{itemize}

The Hausdorff dimension $\alpha_0$ of the set $E$ is the unique root of the following equation
\begin{equation}
\label{eq: self-similar set}
\sum^{p} _{i=1}{\sigma^{\alpha_0} _i}=1.
\end{equation}

It is easy to see that set \eqref{eq: Cantor-like set} is a Cantor-like set and is a self-similar fractal. The set $E$  is called \emph{the Moran set}.
\end{definition}

\begin{definition}{(Definition of  Hua et al.).} 
Let $(n_k)$ be a sequence of positive integers, $J\in\mathbb R^n$ be a compact set with nonempty interior, and $(\Phi_k)$ be a sequence of positive real vectors with
$\Phi_k=(\sigma_{k,1},\sigma_{k,2},\dots , \sigma_{k,n_k}),$
where $k\in\mathbb N$ and
$$
\sum^{n_k} _{j=1}{\sigma_{k,j}}<1.
$$
A  set  of the form
$$
E=\bigcap^{\infty} _{k=1}{\bigcup_{i_1,\dots , i_k\in A_{0,n_k}}{\Delta_{i_1i_2\ldots i_k}}},
$$
where $A_{0,n_k}=\{1, 2, \dots , n_k\}$, is called \emph{the Moran
set associated with the collection $F$}. 
Here
$$
F=\bigcup^{\infty} _{k=0}{F_k}=\bigcup^{\infty} _{k=0}{\left\{J_{\sigma}:=\Delta_{i_1i_2...i_k}: k\in\mathbb N, i_k\in\{1, 2, \dots , n_k\}      \right\}}
$$
\emph{The collection $F$ fulfills the Moran structure} provided it satisfies the following Moran Structure Conditions (MSC):

\begin{enumerate}
\item 
\label{property-1}
$J_{\varnothing}=J$.
\item
\label{property-2}
An arbitrary $J_{\sigma}$ is geometrically similar to $J$.
\item
\label{property-3}
 For any $i,j\in \{1,2, \dots , n_{k+1}\}$ such  that $i\ne j$, the conditions 
$$
\Delta_{i_1i_2\ldots i_ki}\subset\Delta_{i_1i_2\ldots i_k}, ~~~\Delta_{i_1i_2\ldots i_ki}\cap\Delta_{i_1i_2\ldots i_kj}=\varnothing
$$
hold.
\item
\label{property-4} For any $j\in \{1,2, \dots , n_{k+1}\}$,
$$
\frac{d\left(\Delta_{i_1i_2\ldots i_kj}\right)}{d\left(\Delta_{i_1i_2\ldots i_k}\right)}=\sigma_{k+1, j}.
$$
\end{enumerate}
The elements of $F_k$ are called \emph{the basic elements
of order k of the Moran set $E$}, and the elements of $F$ are called \emph{ the basic elements of the
Moran set $E$}.
\end{definition}

\begin{remark}
Let us note that the main difference between definitions of Moran  and Hua is Property~\ref{property-4} in MSC.
\end{remark}

Let $M=M(J,(n_k), (\Phi_k))$ be a class of Moran sets satisfying MSC~\ref{property-1}--\ref{property-4}. It is known that one can define a sequence $(\alpha_k)$, where $\alpha_k$ satisfies the equation
$$
\prod^{k} _{i=1}{\sum^{n_i} _{j=1}{\sigma^{\alpha_k} _{i,j}}}=1.
$$
Also, suppose that 
$$
\alpha_{*}=\liminf_{k\to\infty}{\alpha_k}, ~~~ \alpha^{*}=\limsup_{k\to\infty}{\alpha_k};
$$
$$
c_{*}=\inf_{i,j}{\sigma_{i,j}}, ~~~~c^{*}=\sup_{i,j}{\sigma_{i,j}}.
$$

The following statements are useful for studying the Hausdorff dimension of sets investigated in the present research. Suppose that $\dim_HE$ is the Hausdorff dimension of the set $E$.

\begin{theorem}{(Theorem 1.1 in \cite{HRW2000}).}
\label{th: auxiliary-1}
Let $M=M(J, (n_k), (\Phi_k))$ be a Moran class satisfying $c_{*}=\inf{\sigma_{i,j}}>0$, then for any $E\in M$,
$$
\dim_HE=\alpha_{*}, ~\text{and}~ E~ \text{is a $s$-set if and only if}~ 0<\liminf_{k\to\infty}{\sum_{i_1,i_2,...,i_k}{d(\Delta_{i_1i_2...i_k})^{\alpha^{*}}}}<\infty.
$$
\end{theorem}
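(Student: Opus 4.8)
\medskip
\noindent\emph{Proof strategy.}
The plan is to obtain both assertions by the classical pair of tools for sets of self-similar type: natural covers for the upper estimates, and a mass distribution (Frostman measure) together with a geometric packing lemma for the lower estimates. First I would fix notation. Writing $d_{i_1\ldots i_k}:=d(\Delta_{i_1\ldots i_k})$ and iterating Property~\ref{property-4} along an address gives $d_{i_1\ldots i_k}=d(J)\prod_{m=1}^{k}\sigma_{m,i_m}$, hence
\[
Q_k(s):=\sum_{i_1,\ldots,i_k}d_{i_1\ldots i_k}^{\,s}=d(J)^{s}\prod_{m=1}^{k}\Bigl(\sum_{j=1}^{n_m}\sigma_{m,j}^{\,s}\Bigr),
\]
and since every $\sigma_{m,j}\in(0,1)$ the function $s\mapsto Q_k(s)d(J)^{-s}$ is continuous and strictly decreasing, so $\alpha_k$ is its unique root, with $Q_k(s)>d(J)^{s}$ for $s<\alpha_k$ and $Q_k(s)<d(J)^{s}$ for $s>\alpha_k$. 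Next I would record the three soft consequences of $c_*>0$ that carry the whole argument: (i) $n_kc_*\le\sum_j\sigma_{k,j}<1$, so $n_k<1/c_*$ and the construction tree has uniformly bounded branching; (ii) $c^{*}\le 1-c_*<1$, so $d(J)c_*^{k}\le d_{i_1\ldots i_k}\le d(J)(c^{*})^{k}\to0$ uniformly in the address, i.e.\ the level-$k$ families are covers of $E$ of vanishing mesh; (iii) as $J$ is compact with nonempty interior it contains a ball of some radius $\rho_0>0$, so by Property~\ref{property-2} every basic element $\Delta_\sigma$ contains a ball of radius $\rho_0\,d_\sigma/d(J)$.

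For the upper estimates: given $s>\alpha_*$ I would pick $k_\ell\uparrow\infty$ with $\alpha_{k_\ell}\to\alpha_*$, so $\alpha_{k_\ell}<s$ eventually and $Q_{k_\ell}(s)\le d(J)^{s}$; the level-$k_\ell$ covers then give $\mathcal H^{s}(E)\le d(J)^{s}<\infty$, and letting $s\downarrow\alpha_*$ yields $\dim_H E\le\alpha_*$. Taking the same covers along a subsequence realising $\liminf_k Q_k(\alpha^{*})$ gives $\mathcal H^{\alpha^{*}}(E)\le\liminf_k Q_k(\alpha^{*})$; this already supplies the ``finiteness'' half of the $s$-set criterion and shows that a vanishing $\liminf$ forces the relevant Hausdorff measure to vanish.

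For the lower estimates I would introduce, for a parameter $t>0$, the Borel probability measure $\mu_t$ on $E$ determined by $\mu_t(\Delta_{i_1\ldots i_k})=\prod_{m=1}^{k}\sigma_{m,i_m}^{\,t}\big/\sum_{j}\sigma_{m,j}^{\,t}=d_{i_1\ldots i_k}^{\,t}/Q_k(t)$, and then prove the packing lemma: there is $N_0=N_0(n,c_*,J)$ such that for every $x\in E$ and every small $r>0$ the ball $B(x,r)$ meets at most $N_0$ members of the antichain $\mathcal D_r=\{\Delta_\sigma:\ d_\sigma\le r<d_{\sigma^-}\}$ ($\sigma^{-}$ the parent address), each of which satisfies $c_*r<d_\sigma\le r$ and has level at least $A(r):=\log(d(J)/r)/\log(1/c_*)$, which tends to $\infty$ as $r\to0$. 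For the lemma: any such $\Delta_\sigma$ lies in $B(x,2r)$ and, by (iii), contains a ball of radius $\ge\rho_0c_*r/d(J)$; distinct members of $\mathcal D_r$ are disjoint (they branch at some address position, and Property~\ref{property-3} separates the two relevant ancestors), so these inner balls are pairwise disjoint and a volume comparison in $\mathbb R^{n}$ bounds their number. Since $\mathcal D_r$ covers $E$, the lemma gives $\mu_t(B(x,r))\le N_0\,r^{t}\sup_{k\ge A(r)}Q_k(t)^{-1}$. For $t<\alpha_*$ one has $Q_k(t)>d(J)^{t}$ for all large $k$, so $\mu_t(B(x,r))\le N_0 d(J)^{-t}r^{t}$ for small $r$, and the mass distribution principle gives $\mathcal H^{t}(E)\ge d(J)^{t}/N_0>0$, whence $\dim_H E\ge t$; letting $t\uparrow\alpha_*$ completes $\dim_H E=\alpha_*$. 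Running the same estimate at the exponent of the hypothesis and using $\liminf_k Q_k(\alpha^{*})>0$ keeps $\inf_{k\ge A(r)}Q_k(\alpha^{*})$ bounded below for small $r$, giving $\mu(B(x,r))\le C r^{\alpha^{*}}$ and hence a positive lower bound for the Hausdorff measure; while if that $\liminf$ were $+\infty$ the same display would give $\mu(B(x,r))=o(r^{\alpha^{*}})$, forcing the Hausdorff sums to $\infty$. Together these close the one-sided estimates into the stated ``iff''.

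The step I expect to be the main obstacle is the packing lemma, and specifically making its geometric count uniform: this is the one place where $c_*>0$ is genuinely used --- to bound the branching $n_k$, to keep consecutive scales comparable so that the antichains $\mathcal D_r$ behave well, and to ensure that each $\Delta_\sigma$ contains a ball of size comparable to $d_\sigma$. A second, more delicate point is the exponent bookkeeping in the $s$-set part: since $\alpha_k$ need not be monotone and oscillates between $\alpha_*$ and $\alpha^{*}$, I would have to track the fluctuation of $Q_k(\cdot)$ along the appropriate subsequences and verify that $\liminf_k\sum_{i_1,\ldots,i_k}d(\Delta_{i_1\ldots i_k})^{\alpha^{*}}$ is exactly the quantity calibrating the constant in the mass-distribution estimate, so that its positivity matches positivity of $\mathcal H^{\dim_H E}(E)$ and its finiteness matches finiteness of $\mathcal H^{\dim_H E}(E)$.
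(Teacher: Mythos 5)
This theorem is imported verbatim from \cite{HRW2000} and stated in the paper without proof, so there is no in-paper argument to compare against; your proposal is essentially the standard proof, and the same one used in the cited source: natural level-$k$ covers give the upper estimates via $Q_k(s)\le d(J)^s$ for $s>\alpha_k$, and the Bernoulli-type mass distribution $\mu_t$ together with the bounded-multiplicity packing lemma (which is exactly where $c_*>0$ enters, through bounded branching, comparable consecutive scales in the antichain $\mathcal D_r$, and inner balls of radius comparable to the diameter supplied by geometric similarity) gives the lower estimates via the mass distribution principle. The outline is sound and all the essential ingredients are present. Two minor caveats: the inequality $c^{*}\le 1-c_*$ in your step (ii) presupposes $n_k\ge 2$ (harmless, and implicitly required anyway for the diameters to vanish and for $\mu_t$ to be non-atomic); and the exponent $\alpha^{*}$ in the displayed $s$-set criterion should presumably read $\alpha_{*}$, as in the original Theorem~1.1 of \cite{HRW2000}, if ``$s$-set'' is to mean a $(\dim_HE)$-set --- your machinery is exponent-agnostic and establishes the characterization of $0<\mathcal H^{t}(E)<\infty$ for whichever exponent $t$ is inserted, a point you correctly flag as the bookkeeping to verify.
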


\begin{theorem}{(Theorem 1.2 in \cite{HRW2000}).}
\label{th: auxiliary-2}
Let $M=M(J, (n_k), (\Phi_k))$ be a Moran class. Suppose that the sequences $(n_k), (\Phi_k)$ satisfy the following conditions:
\begin{itemize}
\item $\sup_k{n_k}=\lambda<\infty$;
\item $0<\inf_i{\max_j{\{\sigma_{i,j}\}}}\le\sup_i{\max_j{\{\sigma_{i,j}\}}}<1$.
\end{itemize}
Then for all $E\in M$, $\dim_HE=\alpha_{*}$.
\end{theorem}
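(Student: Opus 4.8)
The plan is to prove the two inequalities $\dim_H E\le\alpha_{*}$ and $\dim_H E\ge\alpha_{*}$ separately. Note first that Theorem~\ref{th: auxiliary-1} already yields the equality whenever $c_{*}=\inf_{i,j}\sigma_{i,j}>0$; under the present hypotheses $c_{*}$ may vanish, so the point is to recover the same conclusion from $\sup_k n_k=\lambda<\infty$ together with $b:=\inf_i\max_j\sigma_{i,j}>0$ and $c^{*}=\sup_{i,j}\sigma_{i,j}=\sup_i\max_j\sigma_{i,j}<1$. Throughout set $g_m(s)=\sum_{j=1}^{n_m}\sigma_{m,j}^{\,s}$, and recall that a level-$k$ basic set satisfies $d(\Delta_{i_1\ldots i_k})=d(J)\prod_{m=1}^{k}\sigma_{m,i_m}$, so that all such diameters are $\le d(J)(c^{*})^{k}\to0$ and, for every exponent $s$, $\sum_{i_1,\ldots,i_k}d(\Delta_{i_1\ldots i_k})^{\,s}=d(J)^{\,s}\prod_{m=1}^{k}g_m(s)$. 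Each $g_m$ is continuous and strictly decreasing, and by definition $\prod_{m=1}^{k}g_m(\alpha_k)=1$.

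\emph{Upper bound.} Fix $s>\alpha_{*}$ and $\eta\in(0,s-\alpha_{*})$, and choose a subsequence $(k_\ell)$ with $\alpha_{k_\ell}\to\alpha_{*}$, so that $s-\alpha_{k_\ell}>\eta$ for $\ell$ large. Since $\sigma_{m,j}\le c^{*}<1$ and $t\mapsto t^{\,s-\alpha_{k_\ell}}$ is increasing, one has $g_m(s)=\sum_j\sigma_{m,j}^{\,\alpha_{k_\ell}}\sigma_{m,j}^{\,s-\alpha_{k_\ell}}\le(c^{*})^{\,s-\alpha_{k_\ell}}g_m(\alpha_{k_\ell})$, whence $\sum_{i_1,\ldots,i_{k_\ell}}d(\Delta_{i_1\ldots i_{k_\ell}})^{\,s}\le d(J)^{\,s}(c^{*})^{(s-\alpha_{k_\ell})k_\ell}\prod_{m=1}^{k_\ell}g_m(\alpha_{k_\ell})\le d(J)^{\,s}((c^{*})^{\eta})^{k_\ell}\to0$ as $\ell\to\infty$. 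As the covers $\{\Delta_{i_1\ldots i_{k_\ell}}\}$ have mesh tending to $0$, this forces $\mathcal H^{s}(E)=0$, so $\dim_H E\le s$; letting $s\downarrow\alpha_{*}$ gives $\dim_H E\le\alpha_{*}$. (Only $c^{*}<1$ is used here, so this half does not require the finiteness of $\sup_k n_k$.)

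\emph{Lower bound.} Fix $s<\alpha_{*}$; then $\alpha_k>s$ for all $k\ge k_0$. Define a Borel probability measure $\mu$ on $E$ by prescribing the masses of the basic sets, $\mu(\Delta_{i_1\ldots i_k})=\prod_{m=1}^{k}\sigma_{m,i_m}^{\,s}/g_m(s)$; this is consistent since $\sum_j\sigma_{m,j}^{\,s}/g_m(s)=1$. Then $\mu(\Delta_{i_1\ldots i_k})=d(\Delta_{i_1\ldots i_k})^{\,s}/(d(J)^{\,s}\prod_{m=1}^{k}g_m(s))$, and since $g_m$ is decreasing and $s<\alpha_k$ for $k\ge k_0$ we get $\prod_{m=1}^{k}g_m(s)\ge\prod_{m=1}^{k}g_m(\alpha_k)=1$ for such $k$; absorbing the finitely many remaining factors shows $\prod_{m=1}^{k}g_m(s)\ge c_1>0$ for every $k$, hence $\mu(\Delta_{i_1\ldots i_k})\le C_1\,d(\Delta_{i_1\ldots i_k})^{\,s}$. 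By the mass distribution principle it then suffices to establish $\mu(B(x,r))\le C\,r^{s}$ for all small $r$ and all $x$ (with $x\in E$, else the ball carries no mass). For such a ball the family $\mathcal G=\{\Delta:\ \Delta\cap B(x,r)\ne\varnothing,\ d(\Delta)\le r<d(\hat\Delta)\}$, where $\hat\Delta$ is the parent of $\Delta$, consists of pairwise disjoint basic sets that together cover $E\cap B(x,r)$, so $\mu(B(x,r))\le C_1 r^{s}\cdot\#\mathcal G$, and everything reduces to a bound $\#\mathcal G\le N=N(n,\lambda,b,c^{*})$ independent of $x$ and $r$.

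This last estimate is the main obstacle, because the basic sets are only assumed disjoint, not uniformly separated. When $c_{*}>0$ it is routine: every $\Delta\in\mathcal G$ then has $c_{*}r<d(\Delta)\le r$, so, being a similar copy of the fixed compact set $J$ with non-empty interior, it contains an open ball of radius $\ge\rho_0 c_{*}r$ (with $\rho_0$ depending only on $J$); these balls are disjoint and contained in $B(x,2r)$, which bounds $\#\mathcal G$ by a packing constant in $\mathbb R^{n}$. In the generality of the statement $c_{*}$ may be $0$, so $d(\Delta)$ cannot be controlled from below and one must argue more carefully: organize $E\cap B(x,r)$ by the tree of basic sets that meet $B(x,r)$, observe that every internal node of this tree has diameter $>r$ and at most $\lambda$ children (using $\sup_k n_k=\lambda$), so that $\#\mathcal G=\#(\text{leaves})\le 1+(\lambda-1)\cdot\#(\text{branching nodes})$; then bound the branching nodes by passing, from each node of diameter $>r$, to sub-cylinders of diameter comparable to $r$ obtained by iterating the choice of the \emph{largest} child, whose contraction ratios all lie in $[b,c^{*}]$, and applying the volume/packing argument above to these comparable-size pieces. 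Carrying out this descent and making it interact correctly with the position of $B(x,r)$ in arbitrary $\mathbb R^{n}$ — with only disjointness, not separation, of the basic sets at our disposal — is the technical heart; granting it, $\mu(B(x,r))\le C r^{s}$, hence $\mathcal H^{s}(E)>0$ and $\dim_H E\ge s$, and letting $s\uparrow\alpha_{*}$ gives $\dim_H E\ge\alpha_{*}$, which completes the proof.
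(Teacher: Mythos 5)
This theorem is imported into the paper from \cite{HRW2000} and is stated there without proof, so there is no in‑paper argument to compare yours with; I can only judge the attempt on its own terms. Your upper bound is complete and correct: from $\sigma_{m,j}\le c^{*}<1$ and $s-\alpha_{k_\ell}>\eta$ you get $\prod_{m\le k_\ell}g_m(s)\le (c^{*})^{\eta k_\ell}\prod_{m\le k_\ell}g_m(\alpha_{k_\ell})=(c^{*})^{\eta k_\ell}\to0$, and the level-$k_\ell$ basic sets form covers of mesh $\le d(J)(c^{*})^{k_\ell}$, so $\mathcal H^{s}(E)=0$. The construction of $\mu$ and the estimate $\mu(\Delta_{i_1\ldots i_k})\le C_1 d(\Delta_{i_1\ldots i_k})^{s}$ are also fine.

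The lower bound, however, has a genuine gap, and it is twofold. First, you do not prove the counting estimate to which you reduce everything: you label it ``the technical heart'' and proceed by ``granting it,'' so the argument is incomplete as written. Second, and more importantly, the target estimate $\#\mathcal G\le N(n,\lambda,b,c^{*})$, uniform in $x$ and $r$, is \emph{false}. Already in $\mathbb R^{1}$ with $J=[0,1]$, $n_k=2$ and $b=c^{*}=\tfrac12$, take the nested chain $\Delta^{(j)}=[(c^{*})^{2j}-(c^{*})^{j},\,(c^{*})^{2j}]$ (length $(c^{*})^{j}$, right endpoint $(c^{*})^{2j}$ converging to $0$ much faster than the lengths shrink) and give each $\Delta^{(j)}$ a second child $E^{(j)}$: a tiny interval inside the gap $((c^{*})^{2j+2},(c^{*})^{2j})$, which is disjoint from $\Delta^{(j+1)}$. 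For $x=0$ and small $r$, every $j$ with $(c^{*})^{2j+2}<r<(c^{*})^{j}$ — that is, $\asymp\log(1/r)$ values of $j$ — produces an element $E^{(j)}\in\mathcal G$, so $\#\mathcal G$ is unbounded as $r\to0$; in particular all of the $\Delta^{(j)}$ in that range are branching nodes, so no descent argument can bound them by a constant. The strategy is nonetheless repairable: grouping the basic sets of diameter $>r$ that meet $B(x,r)$ by level and by dyadic diameter class, and packing the disjoint inner balls of radius $\rho_0 d(\Delta)/d(J)$ that each similar copy of $J$ contains, gives at most a constant per class and hence $\#\mathcal G\le C\lambda\bigl(\log(1/r)\bigr)^{2}$; and a polylogarithmic loss is harmless, since $\mu(B(x,r))\le Cr^{s}\bigl(\log(1/r)\bigr)^{2}$ still yields $\dim_H E\ge s'$ for every $s'<s$. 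Without this correction (or some substitute for the missing uniform separation of basic sets), the lower bound — and hence the theorem — is not established by your argument.
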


\begin{theorem}{(Theorem 1.3 in \cite{HRW2000}).}
\label{th: auxiliary-3}
Let $M=M(J, (n_k), (\Phi_k))$ be a Moran class. Suppose
$$
\lim_{k\to\infty}{\frac{\log{d_k}}{\log{M_k}}}=0,
$$
where $d_k:=\min_{1\le j\le n_k}{\sigma_{k,j}}$, $M_k:=\max_{i_1,i_2,...,i_k}{d(\Delta_{i_1i_2...i_k})}$. Then for all $E\in M$, $\dim_HE=\alpha_{*}$.
\end{theorem}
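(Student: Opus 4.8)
The plan is to prove the two inequalities $\dim_H E\le\alpha_*$ and $\dim_H E\ge\alpha_*$ separately. The first is soft and uses nothing about $d_k,M_k$: since $\Delta_{i_1\ldots i_k}$ is a similar copy of $J$ with ratio $\prod_{m=1}^{k}\sigma_{m,i_m}$, we have $d(\Delta_{i_1\ldots i_k})=d(J)\prod_{m=1}^{k}\sigma_{m,i_m}$, whence $\sum_{i_1,\ldots,i_k}d(\Delta_{i_1\ldots i_k})^{\alpha_k}=d(J)^{\alpha_k}\prod_{m=1}^{k}\sum_{j}\sigma_{m,j}^{\alpha_k}=d(J)^{\alpha_k}$ by the defining equation for $\alpha_k$. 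The families $\{\Delta_\sigma:|\sigma|=k\}$ cover $E$ with mesh $M_k\to0$. Fixing $\varepsilon>0$ and a subsequence $(k_j)$ with $\alpha_{k_j}\to\alpha_*$, for $j$ large we get $\alpha_{k_j}<\alpha_*+\varepsilon$ and $M_{k_j}<1$, so $\sum_{|\sigma|=k_j}d(\Delta_\sigma)^{\alpha_*+\varepsilon}\le\sum_{|\sigma|=k_j}d(\Delta_\sigma)^{\alpha_{k_j}}=d(J)^{\alpha_{k_j}}$, which stays bounded; hence $\mathcal{H}^{\alpha_*+\varepsilon}(E)<\infty$ and $\dim_H E\le\alpha_*+\varepsilon$, and we let $\varepsilon\downarrow0$.

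For the lower bound I would apply the mass distribution principle. Fix $t<\alpha_*$. Because $t<\liminf_k\alpha_k$, the partial products $P_k(t):=\prod_{m=1}^{k}\sum_{j}\sigma_{m,j}^{t}$ exceed $1$ for all large $k$, so $c_0:=\inf_k P_k(t)>0$. Define the natural Moran measure $\mu$ on $E$ by $\mu(\Delta_{i_1\ldots i_k})=\frac{\prod_{m=1}^{k}\sigma_{m,i_m}^{t}}{\prod_{m=1}^{k}\sum_{j}\sigma_{m,j}^{t}}=\bigl(d(\Delta_{i_1\ldots i_k})/d(J)\bigr)^{t}/P_k(t)$; then $\mu(\Delta_\sigma)\le(c_0 d(J)^{t})^{-1}d(\Delta_\sigma)^{t}$ for every basic set. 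To turn this into a ball estimate, fix $x\in E$ and small $r$ (say $r<d(J)/2$), and let $S(x,r)$ be the collection of basic sets $\Delta_\sigma$ of smallest order with $\Delta_\sigma\subseteq B(x,2r)$. These are pairwise disjoint and cover $E\cap B(x,r)$; the parent of each such $\Delta_\sigma$ meets $B(x,r)$ but is not contained in $B(x,2r)$, so it has diameter $>r$, which forces $|\sigma|\le\ell^{*}(r)+1$ with $\ell^{*}(r):=\max\{\ell:M_\ell\ge r\}$ and $d(\Delta_\sigma)>r\,d_{|\sigma|}\ge r\,\delta^{*}(r)$, where $\delta^{*}(r):=\min_{1\le\ell\le\ell^{*}(r)+1}d_\ell$. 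Since each $\Delta_\sigma$ is a similar copy of the compact set $J$ with nonempty interior, it contains a ball of radius comparable to $d(\Delta_\sigma)$, so a volume (packing) comparison inside $B(x,2r)$ yields $\#S(x,r)\le C\,\delta^{*}(r)^{-n}$ with $C=C(n,J)$. Hence $\mu(B(x,r))\le\sum_{\sigma\in S(x,r)}\mu(\Delta_\sigma)\le C'r^{t}\delta^{*}(r)^{-n}$.

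The remaining point, and the place where the hypothesis $\log d_k/\log M_k\to0$ is actually used, is to show $\delta^{*}(r)^{-n}=r^{-o(1)}$ as $r\to0$ (recall $M_k\to0$, so $|\log M_k|\to\infty$). A short Abel--summation argument gives $\max_{1\le\ell\le L}|\log d_\ell|=o(|\log M_L|)$ as $L\to\infty$; moreover, since $|\log\max_j\sigma_{\ell+1,j}|\le|\log d_{\ell+1}|=o(|\log M_{\ell+1}|)$, one obtains $|\log M_{\ell^{*}(r)+1}|\le 2|\log M_{\ell^{*}(r)}|\le 2|\log r|$ for $r$ small. Combining, $|\log\delta^{*}(r)|=\max_{\ell\le\ell^{*}(r)+1}|\log d_\ell|=o(|\log r|)$, so $\delta^{*}(r)^{-n}=r^{-o(1)}$ and $\liminf_{r\to0}\log\mu(B(x,r))/\log r\ge t$ uniformly in $x\in E$. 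By the mass distribution principle (applied with exponent $t-\varepsilon$, noting $\mu(B(x,r))\le C_\varepsilon r^{t-\varepsilon}$ for small $r$, then $\varepsilon\downarrow0$) we get $\dim_H E\ge t$, and letting $t\uparrow\alpha_*$ finishes the proof. I expect the ball-to-cylinder conversion to be the main obstacle: bounding the number of basic sets of a \emph{fixed} order meeting a ball is hopeless, since the spread $M_k/m_k$ of level-$k$ diameters can be super-polynomially large even under the hypothesis; one must instead use the adaptive family $S(x,r)$, exploit that only levels $\le\ell^{*}(r)+1$ occur there, and let the hypothesis control the smallest one-step ratio $d_\ell$ that can appear below that level.
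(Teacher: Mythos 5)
The paper does not actually prove this statement: it is quoted as Theorem~1.3 of Hua--Rao--Wen and used as a black box, so there is no in-paper argument to compare yours against. Judged on its own, your proof is correct and reconstructs what is essentially the standard argument. The upper bound via the level-$k_j$ coverings and the identity $\sum_{|\sigma|=k}d(\Delta_\sigma)^{\alpha_k}=d(J)^{\alpha_k}$ is fine. For the lower bound, the Moran measure is consistent ($\sum_{i_{k+1}}\mu(\Delta_{\sigma i_{k+1}})=\mu(\Delta_\sigma)$ follows from the telescoping of $P_k(t)$); the stopping-time family of maximal cylinders contained in $B(x,2r)$ and meeting $B(x,r)$ is pairwise disjoint (by nestedness) and covers $E\cap B(x,r)$; the parent-diameter argument correctly yields $d(\Delta_\sigma)>r\,d_{|\sigma|}$ and $|\sigma|\le\ell^*(r)+1$ because $M_\ell$ is decreasing; the interior-ball volume count gives $\#S\le C\delta^*(r)^{-n}$ since each $\Delta_\sigma$ is a similar copy of $J$; and your two logarithmic estimates, $\max_{\ell\le L}|\log d_\ell|=o(|\log M_L|)$ (split off finitely many indices, use monotonicity of $|\log M_\ell|$) and $|\log M_{\ell^*(r)+1}|\le 2|\log r|$ (from $M_{\ell+1}\ge M_\ell d_{\ell+1}$), are exactly where the hypothesis $\log d_k/\log M_k\to0$ enters. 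Together these give $\mu(B(x,r))\le C_\eta r^{t-\eta}$ uniformly in $x$, and the mass distribution principle finishes the argument. Your closing diagnosis is also right: a fixed-level cylinder count cannot work here, and the adaptive family plus control of $d_\ell$ for $\ell\le\ell^*(r)+1$ is the correct substitute.

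Two minor points. First, you invoke $M_k\to0$ parenthetically; this is genuinely load-bearing and does \emph{not} follow from the hypotheses as restated in this paper (take $n_k\equiv1$ and $\sigma_{k,1}$ with convergent product: then $\log d_k/\log M_k\to0$ and $\alpha_*=0$, yet $E$ can be a nondegenerate interval), so it must be taken as part of the standing definition of the Moran class, as it is in the source; it would be worth saying this explicitly rather than in passing. Second, the phrase ``basic sets of smallest order with $\Delta_\sigma\subseteq B(x,2r)$'' should be read per branch (maximal such cylinders), as your subsequent use of the parent makes clear; as literally written it could be misread as a single level.
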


\begin{theorem}{(Theorem 3.1 in \cite{HRW2000}).}
\label{th: auxiliary-4}
Let $c_{*}=\inf{\sigma_{i,j}}>0$. Then for any $E\in M(J, (n_k), (\Phi_k))$, we have  
$$
\dim_HE=\alpha_{*}.
$$
\end{theorem}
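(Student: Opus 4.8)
I would prove the two inequalities $\dim_H E\le\alpha_*$ and $\dim_H E\ge\alpha_*$ separately: the upper bound by evaluating the natural covers of $E$ in the Hausdorff sum, the lower bound by constructing a Moran measure on $E$ and invoking the mass distribution principle. Throughout I write $d_{i_1\ldots i_k}:=d(\Delta_{i_1\ldots i_k})$ and use the product formula $d_{i_1\ldots i_k}=d(J)\prod_{m=1}^{k}\sigma_{m,i_m}$, which comes from MSC~\ref{property-2} and~\ref{property-4}, together with the fact (part of the Moran construction) that $\max_{i_1,\ldots,i_k}d_{i_1\ldots i_k}\to0$. I set $g_m(s):=\sum_{j=1}^{n_m}\sigma_{m,j}^{\,s}$; since every $\sigma_{m,j}\in(0,1)$, each $g_m$ is strictly decreasing on $(0,\infty)$, and by definition $\alpha_k$ is the root of $\prod_{m=1}^{k}g_m(s)=1$. (Note also that $c_*>0$ forces $\sup_k n_k<\infty$, since $n_kc_*\le\sum_{j}\sigma_{k,j}<1$.)

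\textbf{Upper bound.} The first step is the identity $\sum_{i_1,\ldots,i_k}d_{i_1\ldots i_k}^{\,s}=d(J)^{s}\prod_{m=1}^{k}g_m(s)$, whence, by monotonicity of the $g_m$, $\sum_{i_1,\ldots,i_k}d_{i_1\ldots i_k}^{\,s}\le d(J)^{s}$ whenever $s\ge\alpha_k$. I would then fix $\varepsilon>0$, pick a subsequence $(k_t)$ with $\alpha_{k_t}\to\alpha_*$, note that $\alpha_{k_t}\le\alpha_*+\varepsilon$ for large $t$ while the covers $\{\Delta_{i_1\ldots i_{k_t}}\}$ of $E$ have vanishing mesh, and conclude $\mathcal H^{\alpha_*+\varepsilon}(E)\le d(J)^{\alpha_*+\varepsilon}<\infty$, hence $\dim_H E\le\alpha_*+\varepsilon$; letting $\varepsilon\downarrow0$ gives $\dim_H E\le\alpha_*$. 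This step uses neither $c_*>0$ nor $\sup_k n_k<\infty$.

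\textbf{Lower bound.} Here I would fix $s<\alpha_*$ and build a probability measure $\mu=\mu_s$ on $E$ by setting $\mu(\Delta_{i_1\ldots i_k})=\prod_{m=1}^{k}\sigma_{m,i_m}^{\,s}/g_m(s)$ — a consistent prescription since $\sum_{i}\sigma_{k+1,i}^{\,s}/g_{k+1}(s)=1$ — realised as a Borel measure supported on $E$ via the natural coding of $E$ by $\prod_k\{1,\ldots,n_k\}$ (the level-$k$ basic sets being pairwise disjoint, MSC~\ref{property-3}). The cell estimate I aim for is $\mu(\Delta_{i_1\ldots i_k})\le C\,d_{i_1\ldots i_k}^{\,s}$ with $C$ independent of $k$ and of the address: since $s<\alpha_*=\liminf_k\alpha_k$, there is $k_0$ with $\alpha_k>s$ for all $k\ge k_0$, so $\prod_{m=1}^{k}g_m(s)\ge\prod_{m=1}^{k}g_m(\alpha_k)=1$ for $k\ge k_0$, and together with the finitely many smaller $k$ this gives $\prod_{m=1}^{k}g_m(s)\ge c>0$ for all $k$; as $\prod_{m=1}^{k}\sigma_{m,i_m}^{\,s}=(d_{i_1\ldots i_k}/d(J))^{s}$, the estimate follows with $C=(d(J)^{s}c)^{-1}$. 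Next I would upgrade this to a ball estimate with the help of $c_*>0$: for small $r>0$ the basic sets $\Delta$ with $d(\Delta)\le r$ whose parent $\widehat\Delta$ has $d(\widehat\Delta)>r$ form a finite partition of $E$, and MSC~\ref{property-4} gives $d(\Delta)\ge c_*d(\widehat\Delta)>c_*r$, so $c_*r<d(\Delta)\le r$ for each member; each such $\Delta$, being similar to $J$, contains a ball of radius $\rho\,d(\Delta)>\rho c_*r$, where $\rho=\rho(J)>0$ is the inradius-to-diameter ratio of $J$, and since these balls lie in the pairwise disjoint $\Delta$'s, a comparison of volumes inside $B(x,2r)$ shows that a ball $B(x,r)$ with $x\in E$ meets at most $N_0=N_0(\rho,c_*,n)$ of the cells. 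Hence $\mu(B(x,r))\le N_0C\,r^{s}$ for all small $r$, and the mass distribution principle yields $\mathcal H^{s}(E)\ge(N_0C)^{-1}>0$, i.e. $\dim_H E\ge s$; letting $s\uparrow\alpha_*$ completes the argument.

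\textbf{Where I expect the obstacle.} The upper bound is routine. The two delicate points are both in the lower bound. The first is producing one measure whose cell masses are uniformly comparable to $d_{i_1\ldots i_k}^{\,s}$ despite the level-exponents $\alpha_k$ oscillating: the fix above — fix $s<\alpha_*$ and build $\mu$ from $s$ itself, using $\alpha_k>s$ for large $k$ to bound $\prod g_m(s)$ from below — is the heart of that. The second, which I expect to be the main obstacle, is the passage from the cell estimate to the ball estimate. The Moran axioms impose no quantitative separation between distinct level-$k$ basic sets (only disjointness), so there is no gap to exploit, and the bound on the number of cells met by $B(x,r)$ must instead be extracted from the bounded geometry of $J$ (inradius comparable to diameter, uniformly over all similar copies) together with the hypothesis $c_*>0$ — which is exactly what keeps the ``stopping level'' attached to $r$ comparable to $r$ and the packing count uniform. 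This is also where $c_*>0$, beyond the mere vanishing of the mesh, is really needed for the theorem.
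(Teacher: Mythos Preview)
The paper does not prove this theorem: it is quoted verbatim from Hua--Rao--Wen et al.\ \cite{HRW2000} as one of four auxiliary results (Theorems~\ref{th: auxiliary-1}--\ref{th: auxiliary-4}) that are merely cited in the introduction and then invoked in the proof of Theorem~\ref{th: the second main theorem}. There is therefore no proof in the present paper to compare your proposal against.

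That said, your argument is correct and is essentially the standard proof of this type of result: natural covers along a subsequence realising $\alpha_*$ for the upper bound, and a Moran mass distribution together with a volume-packing count for the lower bound, with $c_*>0$ entering exactly to pin the stopping-level diameters in $(c_*r,r]$ and make the packing count uniform. Your identification of the two genuine issues --- producing a single measure whose cell masses are uniformly comparable to $d_{i_1\ldots i_k}^{\,s}$ despite the oscillation of $\alpha_k$ (handled by building $\mu$ from a fixed $s<\alpha_*$ and using monotonicity of the $g_m$), and passing from the cell estimate to a ball estimate without any separation hypothesis (handled via the inradius of $J$ and disjointness of the stopping cells) --- matches where the actual work lies in the original source.
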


Let us return to the description of  investigations of the present paper. 

Let $s>1$ be a fixed positive integer. Then \emph{the s-adic representation of numbers from~$[0,1]$} is a representation of the following form:
$$
x=\Delta^s _{\alpha_1\alpha_2...\alpha_n...}=\sum^{\infty} _{n=1}{\frac{\alpha_n}{s^n}},
$$
where $\alpha_n\in A=\{0,1,\dots, s-1\}$.

In addition,  the following representation
$$
x=\Delta^{-s }_{\alpha_1\alpha_2...\alpha_n...}=\sum^{\infty} _{n=1}{\frac{\alpha_n}{(-s)^n}},
$$
is \emph{the nega-s-adic representation of numbers from $\left[-\frac{s}{s+1}, \frac{1}{s+1}\right]$}. Here $\alpha_n\in A$ as well.

It is easy to see that 
\begin{equation*}
x=\Delta^{-s }_{\alpha_1\alpha_2...\alpha_n...}\equiv \frac{1}{s+1}-\Delta^{s }_{\alpha_1[s-1-\alpha_2]...\alpha_{2k-1}[s-1-\alpha_{2k}]...}
\end{equation*}
or
\begin{equation*}
x=\Delta^{-s }_{\alpha_1\alpha_2...\alpha_n...}\equiv \Delta^{s }_{\alpha_1[s-1-\alpha_2]...\alpha_{2k-1}[s-1-\alpha_{2k}]...}-\frac{s}{s+1}.
\end{equation*}

Let us consider  the sets 
\begin{equation*}
\mathbb S_{(s,u)}=\left\{x: x= \Delta^{s}_{{\underbrace{u\ldots u}_{\alpha_1-1}} \alpha_1{\underbrace{u\ldots u}_{\alpha_2 -1}}\alpha_2 ...{\underbrace{u\ldots u}_{ \alpha_n -1}}\alpha_n...},   \alpha_n \in A_0=\{1,2,\dots , s-1\}\setminus\{u\} \right\}
\end{equation*}
and
\begin{equation*}
\mathbb S_{(-s,u)}=\left\{x: x= \Delta^{-s}_{{\underbrace{u\ldots u}_{\alpha_1-1}} \alpha_1{\underbrace{u\ldots u}_{\alpha_2 -1}}\alpha_2 ...{\underbrace{u\ldots u}_{ \alpha_n -1}}\alpha_n...},  \alpha_n \in A_0=\{1,2,\dots , s-1\}\setminus\{u\}  \right\}
\end{equation*}
where $u=\overline{0,s-1}$, the parameters $u$ and $2<s\in\mathbb N$ are fixed for the fixed sets $\mathbb  S_{(s,u)}$, $\mathbb  S_{(-s,u)}$.

 Elements of these sets have certain  restrictions on using combinations of digits in own  representations. For example, let $s>2$ and $u\in \{0,1, \dots , s-1\}$ be fixed positive integers. Then the set $\mathbb S_{(s,u)}$ is the set whose elements represented in terms of the s-adic representation and contain combinations of s-adic digits only from the set
$$
\left\{1, u2, uu3, \dots , \underbrace{u\ldots u}_{u -2}[u-1], \underbrace{u\ldots u}_{u}[u+1], \dots, \underbrace{u\ldots u}_{s -3}[s-2]\}, \underbrace{u\ldots u}_{s -2}[s-1]\}\right\}. 
$$
In the general case, we have a class  $\Upsilon_s$ of the sets  $\mathbb  S_{(s,u)}$ represented  in the form 
\begin{equation*}
\mathbb S_{(s,u)}= \left\{x: x=\frac{u}{s-1} +\sum^{\infty} _{n=1} {\frac{\alpha_n - u}{s^{\alpha_1+\dots+\alpha_n}}},  \alpha_n \ne u, \alpha_n \ne 0  \right\},
\end{equation*}
where  $u=\overline{0,s-1}$ and   parameters $u$,  $s$ are fixed for the set $\mathbb  S_{(s,u)}$. That is, $\Upsilon_s$ contains the sets  $\mathbb  S_{(s,0)}, \mathbb  S_{(s,1)},\dots, \mathbb  S_{(s,u-1)}, \mathbb  S_{(s,u+1)}, \dots, \mathbb  S_{(s,s-1)}$. We say that  a class  $\Upsilon$ of sets  contains  $\Upsilon_3, \Upsilon_4,\dots ,\Upsilon_n,\dots$.

Some articles (for example, see \cite{ DU2014, DU2014(2), {S. Serbenyuk fractals},{S. Serbenyuk abstract 2}, {S. Serbenyuk abstract 3},{S. Serbenyuk abstract 5}, {Symon1}, {Symon2}, {S. Serbenyuk 2013}, {S. Serbenyuk 2017  fractals}} ) are devoted to   sets whose elements have certain  restrictions on using combinations of digits in own s-adic  representation. 

Let us discuss properties of the last-mentioned sets. 

\begin{theorem}[\cite{{S. Serbenyuk 2017  fractals}}\footnote{See Lemmas~1 and~2, Theorems 1--3 which were published   with proofs in English in the preprint \cite{{S. Serbenyuk 2017  fractals}}. These results were  published in the papers~\cite{Symon1, Symon2} in Ukrainian.}; \cite{S. Serbenyuk   fractals}\footnote{This preprint contains results translated into English without proofs from \cite{Symon1, Symon2, {S. Serbenyuk 2013}}. See Theorems 4, 6, and 8 in \cite{S. Serbenyuk   fractals}.}; \cite{Symon1, {Symon2}, {S. Serbenyuk 2013}}.]
\label{th: theorem1}
For  an arbitrary $u \in A$, the sets $\mathbb S_{(s,u)}$ and $\mathbb S_{(-s,u)}$ are  uncountable,   perfect,   nowhere dense sets of zero Lebesgue measure, and  self-similar fractals whose Hausdorff dimension $\alpha_0$ satisfies the following equation 
$$
\sum _{p \ne u, p \in A_0} {\left(\frac{1}{s}\right)^{p \alpha_0}}=1.
$$
\end{theorem}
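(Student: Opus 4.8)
\emph{Proof sketch.} The plan is to recognise both $\mathbb S_{(s,u)}$ and $\mathbb S_{(-s,u)}$ as Moran sets of the kind recalled above and then to read off all the assertions from the Moran-type machinery (in particular Theorems~\ref{th: auxiliary-1} and~\ref{th: auxiliary-4} and Moran's dimension formula in the first definition above). I would start from the series form
\begin{equation*}
x=\frac{u}{s-1}+\sum_{n=1}^{\infty}\frac{\alpha_n-u}{s^{\alpha_1+\cdots+\alpha_n}},\qquad \alpha_n\in B:=\{1,\dots,s-1\}\setminus\{u\},
\end{equation*}
together with its analogue for $\mathbb S_{(-s,u)}$, obtained by grouping the nega-$s$-adic digits into the same blocks $\underbrace{u\cdots u}_{\alpha_n-1}\alpha_n$, namely $x=-\frac{u}{s+1}+\sum_{n\ge 1}\frac{\alpha_n-u}{(-s)^{\alpha_1+\cdots+\alpha_n}}$. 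Fixing the first block $\alpha_1=p$ and factoring out the contribution of the remaining blocks gives $\mathbb S_{(s,u)}=\bigcup_{p\in B}f_p\big(\mathbb S_{(s,u)}\big)$ and $\mathbb S_{(-s,u)}=\bigcup_{p\in B}g_p\big(\mathbb S_{(-s,u)}\big)$, where $f_p$ and $g_p$ are affine contractions of $\mathbb R$ with $|f_p'|=|g_p'|=s^{-p}$. Hence both sets are self-similar and it remains only to fit their cylinder structure into the Moran framework.

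As basic set of order $n$ I would take the $s$-adic (respectively nega-$s$-adic) cylinder $\Delta_{p_1\cdots p_n}$ of all numbers whose expansion begins with $\underbrace{u\cdots u}_{p_1-1}p_1\cdots\underbrace{u\cdots u}_{p_n-1}p_n$; it is a closed interval of length $s^{-(p_1+\cdots+p_n)}$, geometrically similar to the base interval, with $\Delta_{p_1\cdots p_n p}\subset\Delta_{p_1\cdots p_n}$ and diameter ratio $\sigma_p=s^{-p}$. The one place where the restriction $\alpha_n\ne u$, $\alpha_n\ne 0$ is used is in checking that distinct cylinders of the same order have disjoint interiors: if the codes $(p_i)$ and $(p_i')$ first disagree at block $j$, the two prescribed expansions differ at some digit position, because there one of them carries a marker digit $\ne u$ while the other still carries $u$ (or a different marker). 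Thus $\{f_p\}$ and $\{g_p\}$ satisfy the open set condition, so taking $J$ the base interval, $n_k\equiv|B|$ and $\Phi_k\equiv(s^{-p})_{p\in B}$ produces a genuine Moran class, and the associated Moran set $E=\bigcap_n\bigcup_{p_1,\dots,p_n\in B}\Delta_{p_1\cdots p_n}$ agrees with $\mathbb S_{(s,u)}$ off a countable set of $s$-adic rationals (and likewise in the nega-$s$-adic case).

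The metric and topological claims then follow quickly. Being the attractor of finitely many contractions, $\mathbb S_{(s,u)}$ is compact, hence closed; when $|B|\ge 2$ the coding $B^{\mathbb N}\to\mathbb S_{(s,u)}$ is injective off a countable set, so the set is uncountable, and perturbing a code $(\alpha_n)$ in sufficiently far coordinates removes isolated points, so the set is perfect (if $|B|=1$ it degenerates to a single point). For the Lebesgue measure, the order-$n$ basic sets cover $\mathbb S_{(s,u)}$ and have total length $\big(\sum_{p\in B}s^{-p}\big)^n$, which tends to $0$ because $\sum_{p\in B}s^{-p}\le\sum_{p=1}^{s-1}s^{-p}=\frac{1-s^{1-s}}{s-1}<1$ for $s\ge 3$; hence $\mathbb S_{(s,u)}$, and in exactly the same way $\mathbb S_{(-s,u)}$, has zero Lebesgue measure, and being closed with empty interior it is nowhere dense.

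For the Hausdorff dimension I would apply Theorem~\ref{th: auxiliary-4} (equivalently, Moran's dimension formula), using $c_*=\inf\sigma_{i,j}=s^{-\max B}>0$: this yields $\dim_H E=\alpha_*=\alpha_0$, where, the ratio vectors being constant in $k$, every $\alpha_k$ equals the unique root $\alpha_0$ of $\sum_{p\in B}(s^{-p})^{\alpha_0}=1$, which is the equation in the statement. Moreover $\sum_{p_1,\dots,p_k}d(\Delta_{p_1\cdots p_k})^{\alpha_0}=\big(\sum_{p\in B}s^{-p\alpha_0}\big)^k=1\in(0,\infty)$, so Theorem~\ref{th: auxiliary-1} shows that $E$, hence $\mathbb S_{(s,u)}$ (which differs from $E$ by a countable set), is an $\alpha_0$-set; the nega-$s$-adic case is identical since only $|g_p'|=s^{-p}$ enters. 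I expect the only genuine difficulty to be the bookkeeping around cylinder endpoints --- confirming that the overlaps form a countable set, so that the disjointness hypotheses of the quoted results really hold and $E$ may be replaced by $\mathbb S_{(s,u)}$ without affecting the dimension; everything else is routine.
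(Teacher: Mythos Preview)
The paper does not actually prove this theorem: it is quoted from the author's earlier work (the references in the theorem header), and no argument appears in the present article. What the paper \emph{does} prove in detail are the analogous, more general results for $\mathbb S_{(-P,u)}$ (Theorems~\ref{th: the first main theorem} and~\ref{th: the second main theorem}), and there the method is exactly the one you outline: set up the cylinder sets, verify their interval/disjointness/diameter properties, deduce uncountability, perfectness, nowhere-density and zero measure by the standard nested-cover arguments, and then read off the Hausdorff dimension from the Moran machinery (Theorems~\ref{th: auxiliary-1}--\ref{th: auxiliary-4}). Your sketch specialises cleanly to the $s$-adic and nega-$s$-adic cases, where the constant ratio vector $\Phi_k\equiv(s^{-p})_{p\in B}$ collapses Moran's equation to $\sum_{p\in B}s^{-p\alpha_0}=1$; the explicit IFS $\{f_p\},\{g_p\}$ with $|f_p'|=|g_p'|=s^{-p}$ is a nice shortcut that the paper does not spell out but which is implicit in its cylinder computations.

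Two small remarks. First, your parenthetical ``if $|B|=1$ it degenerates'' is exactly the exclusion the paper records in the Remark following the theorem (the cases $s=3$, $u\in\{1,2\}$), so you have correctly identified where the hypothesis $|B|\ge 2$ is needed. Second, your worry about endpoint bookkeeping is harmless here: because every $\alpha_n\ge 1$ and $\alpha_n\ne u$, the coded digit sequence never terminates in all $0$'s, so distinct codes give distinct reals except possibly on a countable set, and in fact the Moran intersection $E$ coincides with $\mathbb S_{(s,u)}$ on the nose; this is the same identification the paper uses (without comment) when it writes $\mathbb S_{(-P,u)}=\bigcap_n\bigcup\Delta^{(-P,u)}_{c_1\cdots c_n}$.
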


\begin{remark}
We note that the statement of the last-mentioned theorem is true for all sets $\mathbb  S_{(\pm s,0)}, \mathbb  S_{(\pm s,1)},\dots,\mathbb  S_{(\pm s,s-1)}$ (for  fixed parameters  $u=\overline{0,s-1}$ and any fixed $2<s\in\mathbb N$ )  without the sets $ S_{(\pm 3,1)}$ and $ S_{(\pm 3,2)}$. 
\end{remark}

\begin{remark}
Some properties of $\mathbb S_{(s,u)}$ and $\mathbb S_{(-s,u)}$ are identical (it follows from the last theorem). However (\cite{{S. Serbenyuk 2013}}), certain local properties, e.g., a disposition of cylinders, etc., are different. Also, for the case of $\mathbb S_{(-s,u)}$, proofs are more difficult. 
\end{remark}

\begin{remark}
The sets $\mathbb  S_{(3,0)}$, $\mathbb  S_{(-3,0)}$, and also sets $\mathbb S_{(s,u)}$ and $\mathbb S_{(-s,u)}$ are Cantor-like sets, Moran sets,  and self-similar fractals for any  positive integer $s>3$ and an arbitrary integer $0\le u<s$.  Really, these sets  have structure \eqref{eq: Cantor-like set}, their Hausdorff dimensions are   solutions  of corresponding  equations \eqref{eq: self-similar set}, and their cylinder sets are sets having  properties of basic sets $\Delta_{i_1i_2...i_n}$.
\end{remark}

 Cantor-like sets are important and appear in a number of researches in various areas of mathematics. We present some of them. For example, such sets are important in the study of Diophantine approximation. In \cite{Dani2012},  it is proven that  a large class of Cantor-like sets of $\mathbb R^d, d \ge 1$, contains uncountably many badly approximable numbers, respectively badly approximable
vectors, when $d \ge 2$. In 1984, Kurt Mahler posed the following fundamental question: How
well can irrationals in the Cantor set be approximated by rationals in the Cantor set? In~\cite{Broderick2011}, towards such a theory,  a Dirichlet-type theorem for this intrinsic diophantine approximation on Cantor-like sets was proven. ``The resulting approximation function is analogous to that for $\mathbb R^d$, but with $d$ being the Hausdorff dimension of the set, and logarithmic dependence on the denominator instead". Let us note that the Cantor set is a set whose elements have a restriction on using ternary digits. This set is the best known example of a fractal in the real line. 

``Nowadays, the ternary Cantor set is the paradigmatic model of the fractal geometry \cite{Mandelbrot1999, Taylor2012}  and in many branches of physics  (see \cite{Bunde1994}). A large class of Cantor-type sets frequently appear as invariant sets
and attractors of many dynamical systems of the real world problems, see \cite{Kennedy1995}."~(\cite{TSBR2017})

The papers \cite{BBLS2016, TSBR2017}, etc., are devoted to the study
of Cantor-type sets in hyperbolic numbers.  The study of arithmetical sum of Cantor-type sets
plays a special role in dynamical systems (explanations are given in \cite{Palis1993}). 
 
To indicate preserving the Hausdorff dimension by singular distribution, Cantor-like sets are useful. For example, in \cite{S.Serbenyuk 2017}, certain self-similar fractals defined in terms of the nega-binary representation and whose elements have some restriction on using binary digits were used for proving a fact that the Minkowski function does not preserve the Hausdorff dimension.

 Finally, let us remark that restrictions on using elements of sets $\mathbb S_{(\pm s,u)}$ are new (they occur for the first time). 

Let $s>1$ be a fixed positive integer and $\alpha_n\in A=\{0,1,\dots, s-1\}$. Let  $P=\{p_0,p_1,\dots , p_{s-1}\}$ be a fixed set whose elements satisfy the following properties: $p_0+p_1+\dots+p_{s-1}=1$ and $p_i>0$ for all $i=\overline{0,s-1}$. Then let us consider the following distribution functions. 

Let $\zeta$ be a random variable defined by the s-adic representation
$$
\zeta= \frac{\iota_1}{s}+\frac{\iota_2}{s^2}+\frac{\iota_3}{s^3}+\dots +\frac{\iota_{k}}{s^{k}}+\dots   = \Delta^{s} _{\iota_1\iota_2...\iota_{k}...},
$$
where  digits $\iota_k$ $(k=1,2,3, \dots )$ are random and taking the values $0,1,\dots ,s-1$ with positive probabilities ${p}_{0}, {p}_{1}, \dots , {p}_{s-1}$. That is,  $\iota_k$ are independent and  $P\{\iota_k=\alpha_k\}={p}_{\alpha_k}$, $\alpha_k \in A$. 

Let $\varsigma$ be a random variable defined by the s-adic representation
$$
\varsigma= \Delta^{s} _{\pi_1\pi_2...\pi_{k}...}=\sum^{\infty} _{k=1}{\frac{\pi_k}{s^k}},
$$
where 
$$
\pi_k=\begin{cases}
\alpha_k&\text{if  $k$ is odd}\\
s-1-\alpha_k&\text{if $k$ is even}
\end{cases}
$$
and digits $\pi_k$ $(k=1,2,3, \dots )$ are random and taking the values $0,1,\dots ,s-1$ with positive probabilities ${p}_{0}, {p}_{1}, \dots , {p}_{s-1}$. That is, $\pi_k$ are independent and  $P\{\pi_k=\alpha_k\}={p}_{\alpha_k}$, $P\{\pi_k=s-1-\alpha_k\}={p}_{s-1-\alpha_k}$, where $\alpha_k \in A$.

Let us consider the distribution function  ${f}_{\zeta}$ of the random variable $\zeta$ and the distribution function  ${\tilde F}_{\varsigma}$ of the random variable $\varsigma$ (their particular form will be noted in this paper  later).

Let $x\in\mathbb S_{(s,u)}$. Let us consider the image 
$$
\tilde y=\tilde F \circ f_l\circ f_+(x),
$$
where 
$$
f_+: x=\Delta^s _{\alpha_1\alpha_2...\alpha_n...}\to \Delta^{-s} _{\alpha_1\alpha_2...\alpha_n...}=y
$$
 is not monotonic on the domain and is a nowhere differentiable function~(\cite{S. Serbenyuk functions with complicated local structure 2013}), $f_l(y)=\frac{1}{s+1}-y$, and $\tilde F$ is the last-mentioned distribution function.  That is, in this paper, the main attention is given to properties and a  local structure of a set of the form:
$$
\mathbb S_{(-P,u)}=\left\{\tilde y: \tilde y=\tilde F \circ f_l\circ f_+(x), x\in\mathbb S_{(s,u)} \right\}
$$ 
It is easy to see that 
$$
\{z: z=\tilde F\circ f_l (x),~~~ x\in \mathbb S_{(-s,u)}\}\equiv \mathbb S_{(-P,u)}.
$$

Finally, one can note that 
$$
\mathbb S_{(P,u)}=\{y: y=f_{\xi}(x), x\in \mathbb S_{(s,u)}\}.
$$
More considerations about the last images of $\mathbb S_{(s,u)}$ and $\mathbb S_{(-s,u)}$ are given in one of the next sections of this paper. 

There are much research devoted to Moran-like constructions and Cantor-like sets (for example, see \cite{{Falconer1997},{Falconer2004}, {Mandelbrot1977},  {PS1995}, DU2014, DU2014(2),  HRW2000, PS1995,  {S.Serbenyuk 2017}, {S. Serbenyuk fractals}} and references therein).

Moran sets and homogeneous Moran sets are very important fractal sets. There exist  many important applications of such sets  in multifractal analysis and  studying  the structure of the spectrum of quasicrystals  (see \cite{KLS2016, LW2011, W2005} and references therein), in the power systems (\cite{Feng2005} and its reference)  and measurement of number theory (see \cite{WW2008, Wu2005}), etc. This facts induce the great research interest in modeling such sets (in terms of various numeral systems) and  studying properties of  their images under certain maps.

Let us consider the main properties of the set $\mathbb S_{(P,u)}$.
\begin{theorem}{\cite{Symon}.}
The set $\mathbb S_{(P,u)}$ is an uncountable, perfect, and nowhere dense set of zero Lebesgue measure.
\end{theorem}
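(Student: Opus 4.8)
The plan is to exploit that $\mathbb S_{(P,u)}$ is the image of the Cantor-like set $\mathbb S_{(s,u)}$ under the distribution function of $\zeta$ (the map appearing in the definition of $\mathbb S_{(P,u)}$), call it $f_\zeta$, and that this $f_\zeta$ is a homeomorphism of $[0,1]$ carrying $s$-adic cylinders onto closed intervals of explicitly known lengths. So the first step is to record the needed properties of $f_\zeta$: since $p_i>0$ for every $i=\overline{0,s-1}$, each $s$-adic cylinder receives positive $\zeta$-probability, hence $f_\zeta$ is continuous, strictly increasing and atomless, i.e.\ a homeomorphism of $[0,1]$ onto itself; and, by independence of the digits $\iota_k$, the length of the interval $f_\zeta(\Delta^s_{c_1c_2\ldots c_m})$ equals $p_{c_1}p_{c_2}\cdots p_{c_m}$.

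Next come the topological properties. By Theorem~\ref{th: theorem1} the set $\mathbb S_{(s,u)}$ is perfect, hence closed, hence compact, so $\mathbb S_{(P,u)}=f_\zeta(\mathbb S_{(s,u)})$ is compact, in particular closed. Since $f_\zeta$ is injective and $\mathbb S_{(s,u)}$ is uncountable, $\mathbb S_{(P,u)}$ is uncountable. For the absence of isolated points: if $y_0=f_\zeta(x_0)\in\mathbb S_{(P,u)}$ had a neighbourhood $U$ with $U\cap\mathbb S_{(P,u)}=\{y_0\}$, then $f_\zeta^{-1}(U)$ would be a neighbourhood of $x_0$ with $f_\zeta^{-1}(U)\cap\mathbb S_{(s,u)}=\{x_0\}$, contradicting perfectness of $\mathbb S_{(s,u)}$; hence $\mathbb S_{(P,u)}$ is perfect. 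Nowhere density will be a byproduct of the measure estimate, since a closed subset of $[0,1]$ of zero Lebesgue measure has empty interior and is therefore nowhere dense.

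Now the substantive (metric) step. An element $x\in\mathbb S_{(s,u)}$ has an $s$-adic expansion whose first $m_n:=\alpha_1+\dots+\alpha_n$ digits are determined once $\alpha_1,\dots,\alpha_n\in\{1,\dots,s-1\}\setminus\{u\}$ are fixed, the $k$-th block contributing $\alpha_k-1$ symbols $u$ followed by the symbol $\alpha_k$. Hence $f_\zeta$ maps the associated cylinder onto an interval $I_{\alpha_1\ldots\alpha_n}$ of length $p_u^{(\alpha_1-1)+\dots+(\alpha_n-1)}\prod_{k=1}^{n}p_{\alpha_k}$, and $\mathbb S_{(P,u)}\subseteq E_n:=\bigcup_{\alpha_1,\dots,\alpha_n}I_{\alpha_1\ldots\alpha_n}$. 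Summing the lengths and factorising over the $n$ independent blocks yields
$$
\lambda(E_n)\le\sum_{\alpha_1,\dots,\alpha_n}p_u^{(\alpha_1-1)+\dots+(\alpha_n-1)}\prod_{k=1}^{n}p_{\alpha_k}=q^n,
$$
where $q:=\sum_{\alpha\in\{1,\dots,s-1\}\setminus\{u\}}p_u^{\alpha-1}p_\alpha$. Since $0<p_u<1$ one has $p_u^{\alpha-1}\le1$, so $q\le\sum_{\alpha\in\{1,\dots,s-1\}\setminus\{u\}}p_\alpha\le\sum_{\alpha=1}^{s-1}p_\alpha=1-p_0<1$. Therefore $\lambda(\mathbb S_{(P,u)})\le\lim_{n\to\infty}q^n=0$, which also completes the proof that $\mathbb S_{(P,u)}$ is nowhere dense.

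The only real work is the combinatorial bookkeeping of the last paragraph: translating the ``block'' description of the digits of points of $\mathbb S_{(s,u)}$ into the $s$-adic positions, and hence the product of $p_i$'s, attached to each cylinder on which $f_\zeta$ acts; once the contraction ratio $q$ is identified, the bound $q<1$ and all the rest are routine. As in the remark following Theorem~\ref{th: theorem1}, one tacitly restricts to the non-degenerate range of parameters in which $\{1,\dots,s-1\}\setminus\{u\}$ has at least two elements (thus excluding $\mathbb S_{(\pm3,1)}$ and $\mathbb S_{(\pm3,2)}$), since otherwise $\mathbb S_{(s,u)}$, and hence $\mathbb S_{(P,u)}$, is a single point.
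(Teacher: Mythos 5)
Your proposal is correct. Note, however, that the paper does not actually prove this statement itself: it is quoted from the companion work \cite{Symon}, and what the paper does prove in detail is the analogous Theorem~\ref{th: the first main theorem} for the nega-$P$ set $\mathbb S_{(-P,u)}$. Measured against that proof, your route is genuinely different in two respects. First, you transfer uncountability, compactness and perfectness from $\mathbb S_{(s,u)}$ (known from Theorem~\ref{th: theorem1}) through the homeomorphism $f_\zeta$ of $[0,1]$, whereas the paper re-establishes these properties intrinsically for the image set: uncountability via an auxiliary digit map, perfectness via the nested unions $S_k$ of closed intervals, and it needs the continuity and strict monotonicity of the distribution function only as a lemma. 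Second, you obtain nowhere density for free from ``closed plus Lebesgue-null'', while the paper derives it from the separation of consecutive cylinders (Property~5 of Lemma~\ref{lm: Lemma on cylinders}), a much heavier computation that is, however, also local information the author wants for its own sake. Your covering estimate $\lambda(E_n)\le q^n$ with $q=\sum_{\alpha}p_u^{\alpha-1}p_\alpha\le 1-p_0<1$ is the exact $P$-case analogue of the paper's bound $\lambda(S_n)\le\max\{\lambda(I_{\overline 0}),\lambda(I_{\underline 0})\}\,(\max_k v_{c_k})^n$, just without the alternating bookkeeping that the nega-$P$ case forces; the identification of the block contraction ratios $p_u^{\alpha-1}p_\alpha$ is also consistent with the dimension equation $\sum_i(p_ip_u^{i-1})^{\alpha_0}=1$ quoted for $\mathbb S_{(P,u)}$. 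Your closing caveat about the degenerate parameters ($s=3$, $u\in\{1,2\}$) matches the paper's own remark. Overall the argument is sound and, for the positive case, cleaner than a direct cylinder analysis.
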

\begin{theorem}{\cite{Symon}.}
The set  $\mathbb S_{(P,u)} $ is a self-similar fractal whose Hausdorff dimension $\alpha_0 (\mathbb S_{(P,u)})$  satisfies the following equation 
$$
\sum _{i\in A_0\setminus\{u\}} {\left(p_ip^{i-1} _u\right)^{\alpha_0}}=1.
$$
\end{theorem}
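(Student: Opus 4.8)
The plan is to exhibit $\mathbb S_{(P,u)}$ as the attractor of an explicit finite family of contracting similarities of the real line obeying the open set condition, and then to read off its Hausdorff dimension from Moran's equation~\eqref{eq: self-similar set}. First I would write down the distribution function $f_\zeta$ of $\zeta$ explicitly: putting $\beta_i:=p_0+p_1+\dots+p_{i-1}$ (so $\beta_0=0$), one has $f_\zeta(\Delta^s_{\gamma_1\gamma_2\dots})=\sum_{n\ge 1}\beta_{\gamma_n}\prod_{k=1}^{n-1}p_{\gamma_k}$. Since every $p_i>0$, $f_\zeta$ is a continuous, strictly increasing bijection of $[0,1]$ onto itself, i.e.\ a homeomorphism. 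The feature that drives everything is the self-affine behaviour of $f_\zeta$ on $s$-adic cylinders: for any word $w=c_1\dots c_m$ one has $f_\zeta(\Delta^s_{w\gamma_1\gamma_2\dots})=f_\zeta(\Delta^s_{w000\dots})+\left(\prod_{k=1}^{m}p_{c_k}\right)f_\zeta(\Delta^s_{\gamma_1\gamma_2\dots})$; in particular $f_\zeta$ maps $\Delta^s_{w}$ onto a closed interval of length $\prod_{k=1}^{m}p_{c_k}$ (the right endpoint is computed using $\beta_{s-1}\sum_{l\ge1}p_{s-1}^{l-1}=1$).

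Next I would recall, from Theorem~\ref{th: theorem1} and the construction preceding it, that $\mathbb S_{(s,u)}$ is the attractor of the iterated function system $\{g_i\}_{i\in A_0\setminus\{u\}}$, where $g_i$ prepends the admissible block $v_i:=\underbrace{u\dots u}_{i-1}i$ of length $i$; concretely $g_i(t)=s^{-i}t+d_i$, with $d_i$ the left endpoint of $\Delta^s_{v_i}$. The blocks $v_i$ form a prefix-free family — for $i<j$ the word $u^{i-1}i$ is an initial segment of $u^{j-1}j$ only if $i=u$, which is excluded — so the rank-one cylinders $\Delta^s_{v_i}$ have pairwise disjoint interiors, and the same holds at every level; this is precisely the Moran (Cantor-like, self-similar) structure of $\mathbb S_{(s,u)}$, and $\mathbb S_{(s,u)}$ is compact since it is perfect and bounded.

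Then I would transport this construction by $f_\zeta$. Applying the self-affine identity with $w=v_i$ yields the conjugacy $f_\zeta\circ g_i=\tilde g_i\circ f_\zeta$, where $\tilde g_i(\tau)=\left(p_u^{i-1}p_i\right)\tau+\tilde d_i$ with $\tilde d_i$ the left endpoint of $J_i:=f_\zeta(\Delta^s_{v_i})$. Conjugation by the homeomorphism $f_\zeta$ sends the attractor of $\{g_i\}$ to the attractor of $\{\tilde g_i\}$, so $\mathbb S_{(P,u)}=f_\zeta(\mathbb S_{(s,u)})$ is exactly the attractor of $\{\tilde g_i\}_{i\in A_0\setminus\{u\}}$. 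Each $\tilde g_i$ is a similarity of the line with ratio $r_i:=p_ip_u^{i-1}\in(0,1)$ (here one uses $p_u>0$ and $0<p_i<1$), and since $f_\zeta$ is increasing and the $\Delta^s_{v_i}$ have disjoint interiors, the intervals $J_i\subset[0,1]$ have pairwise disjoint interiors; hence the open set condition holds with $U=(0,1)$, because $\tilde g_i(U)=\operatorname{int}J_i\subset U$ and $\tilde g_i(U)\cap\tilde g_j(U)=\varnothing$ for $i\ne j$. Thus $\mathbb S_{(P,u)}$ is a self-similar fractal, and — being a finite similarity system with the open set condition, equivalently a homogeneous Moran set with $c_*=\min_i r_i>0$ and $\alpha_*=\alpha^*$, to which Theorem~\ref{th: auxiliary-1} (or Theorem~\ref{th: auxiliary-4}) applies — Moran's equation~\eqref{eq: self-similar set} gives $\dim_H\mathbb S_{(P,u)}=\alpha_0$, the unique root of $\sum_{i\in A_0\setminus\{u\}}\left(p_ip_u^{i-1}\right)^{\alpha_0}=1$.

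I expect the main obstacle to be the bookkeeping around $f_\zeta$: establishing the self-affine identity (and the attendant length formula) for $f_\zeta$ on cylinders, and then verifying that $f_\zeta$ — though only a singular homeomorphism, not bi-Lipschitz — nonetheless conjugates $\{g_i\}$ into a genuine similarity system with exactly the ratios $p_ip_u^{i-1}$ while preserving the separation of pieces needed for the open set condition. Once the conjugacy and the open set condition are secured, the self-similarity and the dimension equation follow immediately.
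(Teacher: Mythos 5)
Your argument is correct. Note first that the paper you are working from does not actually prove this theorem: it is quoted from \cite{Symon} with no proof supplied, so the only internal point of comparison is the paper's treatment of the analogous (and harder) statement for $\mathbb S_{(-P,u)}$, which proceeds by computing diameters of the cylinders $\Delta^{(-P,u)}_{c_1\ldots c_n}$, the ratios $d(\Delta^{(-P,u)}_{c_1\ldots c_nc_{n+1}})/d(\Delta^{(-P,u)}_{c_1\ldots c_n})$, and the multiplicities of each ratio, and then invokes the Hua--Rao--Wen theorems for Moran classes. Your route is genuinely different in flavor and, for the positive $P$-representation, cleaner: you exploit the exact self-affine identity $f_\zeta(\Delta^s_{w\gamma_1\gamma_2\ldots})=f_\zeta(\Delta^s_{w000\ldots})+\bigl(\prod_{k=1}^m p_{c_k}\bigr)f_\zeta(\Delta^s_{\gamma_1\gamma_2\ldots})$ (which is correct, since $\beta_0=0$ and $f_\zeta(\Delta^s_{(s-1)(s-1)\ldots})=\beta_{s-1}/(1-p_{s-1})=1$) to conjugate the block-prepending maps $g_i$ into genuine similarities $\tilde g_i$ with ratios $p_ip_u^{\,i-1}$, verify prefix-freeness of the blocks $u^{i-1}i$ (using $i\neq u$) to get the open set condition with $U=(0,1)$, and conclude by Hutchinson/Moran. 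This buys an actual proof of \emph{self-similarity} (an explicit IFS), whereas the cylinder-ratio method only certifies the Moran structure; conversely, the cylinder method is the one that generalizes to $\mathbb S_{(-P,u)}$, where the ratios depend on the parity of $c_1+\dots+c_n$ and no single conjugating homeomorphism produces a stationary similarity system. Two minor points you could make explicit: the degenerate cases $(s,u)\in\{(3,1),(3,2)\}$, excluded by the paper's Remark, must be excluded here too since then $A_0\setminus\{u\}$ is a singleton and the attractor is a point; and the uniqueness of the root $\alpha_0$ follows from strict monotonicity of $\alpha\mapsto\sum_i (p_ip_u^{\,i-1})^{\alpha}$ together with $\sum_i p_ip_u^{\,i-1}<1$.
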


 In the present article, the main attention is given to properties of sets $\mathbb S_{(-P,u)}$. 
In this paper, topological and metric, local and  fractal properties of the set $\mathbb S_{(-P,u)}$ are investigated. 

\begin{remark}
One can note that  the present investigations are similar with investigations (\cite{Symon}) for the set $\mathbb S_{(P,u)}$,  but are more complicated and some techniques for proving the main statements are different. In addition, one can note that  if for $\mathbb S_{(-s,u)}$ and $\mathbb S_{(s,u)}$,  topological,  metric, and  fractal properties (without some properties of cylinders) are similar, then   fractal  and some local  properties of $\mathbb S_{(-P,u)}$ and $\mathbb S_{(P,u)}$ are different. For example, $\mathbb S_{(P,u)}$ is a self-similar fractal (i.e., this is a Moran set by Moran's definition, \cite{Moran1946}) but $\mathbb S_{(-P,u)}$ is  a non-self-similar set having the Moran structure (i.e., this is a  Moran set by the definition of Hua et al. (see the  definition in \cite{HRW2000})). 
\end{remark}

Let us formulate the main results of the present paper.

Suppose $d(\cdot)$ is the diameter of a set and a cylinder $\Delta^{(-P,u)} _{c_1c_2...c_n}$ is a set whose elements are elements of $\mathbb S_{(-P,u)}$ and for these elements the condition $\alpha_i=c_i$ holds for all $i=\overline{1,n}$ (here $c_1, c_2,\cdots, c_n$ is a fixed tuple). The main resuts of the present paper are formulated in the following theorems. 

\begin{theorem}
\label{th: the first main theorem}
An arbitrary set $\mathbb S_{(-P,u)}$ is an uncountable, perfect, and nowhere dense set of zero Lebesgue measure.
\end{theorem}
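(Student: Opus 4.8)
The plan is to realize $\mathbb S_{(-P,u)}$ as a well-behaved image of $\mathbb S_{(-s,u)}$ and transfer the properties of the latter (Theorem~\ref{th: theorem1}), supplementing with one cylinder‑diameter estimate for the two metric assertions. Throughout put $\Psi:=\tilde F\circ f_l$, so that $\mathbb S_{(-P,u)}=\Psi\!\left(\mathbb S_{(-s,u)}\right)$ by the equivalence noted just before the statement; write $D:=\{1,2,\ldots,s-1\}\setminus\{u\}$ and let $\lambda$ denote Lebesgue measure. In the degenerate cases $s=3$, $u\in\{1,2\}$, where $|D|=1$, the set is a single point; these being excluded, I assume $|D|\ge 2$.

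First I would record the elementary properties of $\Psi$. The map $f_l(y)=\frac1{s+1}-y$ is an orientation‑reversing affine bijection taking $\left[-\frac{s}{s+1},\frac1{s+1}\right]$ onto $[0,1]$. The function $\tilde F$ is the distribution function of $\varsigma=\Delta^s_{\pi_1\pi_2\ldots}$; since its $s$-adic digits are independent and $\max_i p_i<1$, $\varsigma$ is atomless and $\tilde F$ is continuous, while every $s$-adic cylinder has positive $\varsigma$-measure because all $p_i>0$, so $\tilde F$ is strictly increasing on $[0,1]$. Hence $\Psi$ is a continuous injection on $\left[-\frac{s}{s+1},\frac1{s+1}\right]$, and since $\mathbb S_{(-s,u)}$ is compact, $\Psi$ restricted to it is a homeomorphism onto $\mathbb S_{(-P,u)}$. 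As $\mathbb S_{(-s,u)}$ is uncountable and perfect by Theorem~\ref{th: theorem1}, and both properties are homeomorphism invariants, $\mathbb S_{(-P,u)}$ is uncountable and perfect; being a continuous image of a compact set it is also compact, hence closed. Consequently ``nowhere dense'' will follow once $\lambda\!\left(\mathbb S_{(-P,u)}\right)=0$ is established, a closed null set having empty interior.

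For the measure I would bound the diameters of the cylinders $\Delta^{(-P,u)}_{c_1\ldots c_n}$. Fixing $\alpha_1=c_1,\ldots,\alpha_n=c_n$ prescribes the first $L:=c_1+\cdots+c_n$ nega-$s$-adic digits $\beta_1,\ldots,\beta_L$ of $x\in\mathbb S_{(-s,u)}$ as the concatenation of the blocks $B_{c_j}=\underbrace{u\ldots u}_{c_j-1}c_j$; via $\Delta^{-s}_{\beta_1\beta_2\ldots}=\frac1{s+1}-\Delta^{s}_{\beta_1[s-1-\beta_2]\ldots}$ this fixes the first $L$ $s$-adic digits $\delta_1,\ldots,\delta_L$ of $f_l(x)$, so $\Delta^{(-P,u)}_{c_1\ldots c_n}$ lies inside $\tilde F$ of an $s$-adic cylinder of $\varsigma$-measure $\prod_{k=1}^L P(\pi_k=\delta_k)$. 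A short parity check (the alternating substitutions in the definitions of $\pi_k$ and of $\delta_k$ cancel) gives $P(\pi_k=\delta_k)=p_{\beta_k}$ for each $k$, so this measure equals $\prod_{j=1}^n p_u^{c_j-1}p_{c_j}$, whence
$$
d\!\left(\Delta^{(-P,u)}_{c_1\ldots c_n}\right)\le\prod_{j=1}^{n}p_u^{c_j-1}p_{c_j}\le\rho^{\,n},\qquad\rho:=\max_{c\in D}p_u^{c-1}p_c<1 .
$$
Covering $\mathbb S_{(-P,u)}$ by the convex hulls of its $n$-th level cylinders then yields
$$
\lambda\!\left(\mathbb S_{(-P,u)}\right)\le\sum_{(c_1,\ldots,c_n)\in D^{n}}\prod_{j=1}^{n}p_u^{c_j-1}p_{c_j}=\left(\sum_{c\in D}p_u^{c-1}p_c\right)^{\!n}=q^{\,n},
$$
where $q:=\sum_{c\in D}p_u^{c-1}p_c\le\sum_{c\in D}p_c\le\sum_{c=1}^{s-1}p_c=1-p_0<1$ (using $p_u^{c-1}\le1$ and $p_0>0$). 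Letting $n\to\infty$ gives $\lambda\!\left(\mathbb S_{(-P,u)}\right)=0$, hence nowhere density, and all four assertions follow. The step I expect to demand genuine care is this last one: identifying the $s$-adic cylinder produced by the orientation‑reversing $f_l$ (the nega-$s$-adic order is not the lexicographic digit order) and carrying out the parity cancellation that collapses the $\varsigma$-measure to $\prod_j p_u^{c_j-1}p_{c_j}$; by contrast, the topological statements are soft once $\Psi$ is known to be a homeomorphism on $\mathbb S_{(-s,u)}$.
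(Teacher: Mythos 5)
Your topological argument is sound and in fact cleaner than the paper's: $f_l$ is an affine bijection, $\tilde F$ is continuous and strictly increasing (all $p_i>0$), so $\Psi=\tilde F\circ f_l$ restricted to the compact set $\mathbb S_{(-s,u)}$ is a homeomorphism onto $\mathbb S_{(-P,u)}$, and uncountability, perfectness and closedness transfer. The difficulty is exactly where you predicted it would be, and your ``parity cancellation'' there is wrong. The random digits $\pi_k$ of $\varsigma$ satisfy $P\{\pi_k=j\}=p_j$ for every $k$ (this is the reading forced by the paper's own derivation of $\tilde F_\varsigma$, where $P\{\pi_{2j}=s-1-\alpha_{2j}\}=p_{s-1-\alpha_{2j}}=\tilde p_{\alpha_{2j}}$); there is no second alternating substitution to cancel the one coming from $f_l$. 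Hence the $\varsigma$-measure of the $s$-adic cylinder with digits $\delta_k$ is $\prod_k p_{\delta_k}$, and since $\delta_k=\beta_k$ for odd $k$ but $\delta_k=s-1-\beta_k$ for even $k$, the containing nega-$P$-cylinder has length $\prod_{k\text{ odd}}p_{\beta_k}\prod_{k\text{ even}}p_{s-1-\beta_k}$ --- exactly the formula $|\Delta^{-P}_{c_1\ldots c_m}|=\tilde p_{c_1}\cdots\tilde p_{c_m}$ in the paper's cylinder lemma, and exactly what makes the four quantities $\omega_{1,c},\dots,\omega_{4,c}$ in Theorem~\ref{th: the second main theorem} parity-dependent. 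Your value $\prod_j p_u^{c_j-1}p_{c_j}$ is not even an upper bound for this (take $p_{s-1-u}>p_u$), so the estimate $d(\Delta^{(-P,u)}_{c_1\ldots c_n})\le\prod_j p_u^{c_j-1}p_{c_j}$ and the resulting bound $q=\sum_{c\in D}p_u^{c-1}p_c\le 1-p_0$ both fail. Because you derive nowhere-density from measure zero, the error propagates to that claim as well.

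The covering strategy itself is the paper's and is salvageable: with the correct diameters, the sum over rank-$n$ cylinders factors block by block, and for each block the sum over $c\in D$ of $\left(\prod_i\tilde p_{u,i}\right)\tilde p_{c,\,\cdot}$ is the total length of finitely many pairwise disjoint nega-$P$-subcylinders of the preceding one, hence is a constant $v<1$ depending only on the parity of the starting position (two possible values, each $<1$); this gives $\lambda(S_n)\le C\,v^n\to 0$. This is precisely the paper's computation with its quantities $v_{c_k}$ and the two initial intervals $I_{\overline 0}$, $I_{\underline 0}$. So the proposal needs its central diameter computation replaced, not its architecture; note also that the paper proves nowhere-density independently of the measure statement, via the separation of consecutive cylinders (Property~5 of Lemma~\ref{lm: Lemma on cylinders}), whereas your route makes everything hinge on the (currently broken) measure estimate.
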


One can note that for proving topological and metric properties of $\mathbb S_{(-P,u)}$,  cylinders of of this set and their properties are used. Such investigations includes local properties of the set $\mathbb S_{(-P,u)}$.

\begin{theorem}
\label{th: the second main theorem}
In the general case,  the set  $\mathbb S_{(-P,u)} $ is not a self-similar fractal,  the Hausdorff dimension $\alpha_0 (\mathbb S_{(-P,u)})$ of which can be calculated by the formula: 
$$
\alpha_0=\liminf_{k\to\infty}{\alpha_k},
$$
where $(\alpha_k)$ is a sequence of numbers satisfying the equation
$$
\left(\sum_{\substack{c_1 \text{is odd}\\ c_1\in \overline{A}}}{\left(\omega_{2,c_1}\right)^{\alpha_1} }+\sum_{\substack{c_1 \text{is even}\\ c_1\in \overline{A}}}{\left(\omega_{4,c_1}\right)^{\alpha_1} }\right)\times
$$
$$
\times\prod^k _{i=2}{\left(\sum_{\substack{c_i \text{is odd}\\ c_i\in \overline{A}}}{N_{1,c_i}\left(\omega_{1,c_i}\right)^{\alpha_i} }+\sum_{\substack{c_i \text{is odd}\\ c_i\in \overline{A}}}{N_{2,c_i}\left(\omega_{2,c_i}\right)^{\alpha_i} }+\sum_{\substack{c_i \text{is even}\\ c_i\in \overline{A}}}{N_{3,c_i}\left(\omega_{3,c_i}\right)^{\alpha_i} }+\sum_{\substack{c_i \text{is even}\\ c_i\in \overline{A}}}{N_{4,c_i}\left(\omega_{4,c_i}\right)^{\alpha_i} }\right)} =1.   
$$
Here $N_{j,c_i}$ ($j=\overline{1,4}, 1<i\in\mathbb N$) is the number of cylinders $\Delta^{(-P,u)} _{c_1c_2...c_i}$ for which
$$
\frac{d\left(\Delta^{(-P,u)} _{c_1c_2...c_{i-1}c_i}\right)}{d\left(\Delta^{(-P,u)} _{c_1c_2...c_{i-1}}\right)}=\omega_{j,c_i}.
$$
Also, 
$$
\omega_{1,c_i}=\underbrace{p_{s-1-u}p_u\ldots p_{s-1-u}p_u}_{c_i-1}p_{s-1-c_i}\frac{d\left(\overline{\mathbb S_{(P,u)}}\right)}{d\left(\underline{\mathbb S_{(P,u)}}\right)}~~~\text{for an  odd number $c_i$},
$$
$$
\omega_{2,c_i}=\underbrace{p_up_{s-1-u}\ldots p_up_{s-1-u}}_{c_i-1}p_{c_i}\frac{d\left(\underline{\mathbb S_{(P,u)}}\right)}{d\left(\overline{\mathbb S_{(P,u)}}\right)}~~~\text{for an odd number $c_i$},
$$
$$
\omega_{3,c_i}=\underbrace{p_{s-1-u}p_u\ldots p_{s-1-u}p_up_{s-1-u}}_{c_i-1}p_{c_i}~~~\text{for an even number $c_i$},
$$
$$
\omega_{4,c_i}=\underbrace{p_up_{s-1-u}\ldots p_up_{s-1-u}p_{u}}_{c_i-1}p_{s-1-c_i}~~~\text{for an even number $c_i$}.
$$
In addition, $N_{1,c_i}+N_{2,c_i}=l(m+l)^{i-1}$ and $N_{3,c_i}+N_{4,c_i}=m(m+l)^{i-1}$, where $l$ is the number of odd numbers in the set  $\overline{A}=A\setminus\{0,u\}$ and $m$ is the number of even numbers in $\overline{A}$.
\end{theorem}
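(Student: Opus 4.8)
The plan is to exhibit $\mathbb S_{(-P,u)}$ as a Moran set in the sense of Hua et al., to compute the diameters of its cylinders explicitly, and then to apply Theorems~\ref{th: auxiliary-1}--\ref{th: auxiliary-4}. Since all $p_i>0$, the distribution function $\tilde F$ is a strictly increasing homeomorphism of $[0,1]$; hence $g:=\tilde F\circ f_l$ is a decreasing homeomorphism, and by the identity recorded in the Introduction one has $\mathbb S_{(-P,u)}=g\bigl(\mathbb S_{(-s,u)}\bigr)$. I would declare the order-$n$ basic set of $\mathbb S_{(-P,u)}$ to be $\mathrm{conv}\,\Delta^{(-P,u)}_{c_1\dots c_n}$, the image under $g$ of the convex hull of the order-$n$ block-cylinder of $\mathbb S_{(-s,u)}$ with block parameters $c_1,\dots,c_n\in\overline{A}$. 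Because those block-cylinders of $\mathbb S_{(-s,u)}$ are nested, pairwise disjoint at each level, and shrink to points (Theorem~\ref{th: theorem1}), and $g$ is a decreasing homeomorphism, the Moran Structure Conditions hold, with a constant number $n_k\equiv|\overline{A}|=l+m$ of children at every level; this already exhibits $\mathbb S_{(-P,u)}$ as a Moran set, and I will check below that it is not self-similar.

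The core of the argument is the diameter of $\Delta^{(-P,u)}_{c_1\dots c_n}$. Let $m_n=c_1+\dots+c_n$ and let $\beta_1\dots\beta_{m_n}$ be the nega-$s$-adic digit block determined by $c_1,\dots,c_n$ (that is, $c_k-1$ copies of $u$ followed by $c_k$, for $k=1,\dots,n$). Using the identity $\Delta^{-s}_{\alpha_1\alpha_2\dots}\equiv\frac1{s+1}-\Delta^{s}_{\alpha_1[s-1-\alpha_2]\dots}$ to pass to the $s$-adic picture, and then the explicit form of $\tilde F$, one shows that $g$ carries the order-$n$ block-cylinder of $\mathbb S_{(-s,u)}$ to a set of diameter $\bigl(\prod_{l=1}^{m_n}\mu_l(\beta_l)\bigr)\,\delta_{m_n}$, where $\mu_l(d)=p_d$ when $l$ is odd and $\mu_l(d)=p_{s-1-d}$ when $l$ is even, and $\delta_{m_n}$ is a ``shape factor'' equal to $d(\overline{\mathbb S_{(P,u)}})$ or $d(\underline{\mathbb S_{(P,u)}})$ according to the parity of $m_n$ (this is exactly where non-self-similarity will come from: for asymmetric $P$ the two shape factors differ). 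Taking the quotient of consecutive diameters and separating the four cases given by the parity of $c_i$ together with the parity $\varepsilon_{i-1}$ of $m_{i-1}$ — the ``state'' of the parent cylinder — produces precisely $\omega_{1,c_i},\dots,\omega_{4,c_i}$: the $c_i-1$ copies of $u$ yield the alternating products $p_up_{s-1-u}\cdots$ (begun with $p_u$ or $p_{s-1-u}$ according to $\varepsilon_{i-1}$), the terminal digit yields $p_{c_i}$ or $p_{s-1-c_i}$, and the shape factors telescope along a branch, surviving only when $c_i$ is odd (when the parity changes). The state obeys $\varepsilon_i=\varepsilon_{i-1}\oplus(c_i\bmod 2)$ with $\varepsilon_0=0$; tracking, level by level, how many order-$i$ cylinders sit in each state and have each terminal parity gives the numbers $N_{j,c_i}$ and the identities $N_{1,c_i}+N_{2,c_i}=l(l+m)^{i-1}$, $N_{3,c_i}+N_{4,c_i}=m(l+m)^{i-1}$. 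Evaluating $\sum_{(c_1,\dots,c_k)}d\bigl(\Delta^{(-P,u)}_{c_1\dots c_k}\bigr)^{\alpha}$ and collecting terms according to the state yields the displayed product equation whose root is $\alpha_k$; since $\varepsilon_0=0$, only the types $\omega_2,\omega_4$ can occur at the first step, which is why the first factor has that special form.

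It then remains to pass from $(\alpha_k)$ to the Hausdorff dimension and to settle non-self-similarity. As each $c_i$ ranges over the finite set $\overline{A}$ and only four ratio-types occur, the contraction ratios take only finitely many values, so $c_{*}=\inf_{i,j}\sigma_{i,j}>0$; Theorem~\ref{th: auxiliary-1} (equivalently Theorem~\ref{th: auxiliary-4}) then gives $\dim_H\mathbb S_{(-P,u)}=\alpha_{*}=\liminf_{k\to\infty}\alpha_k$. Non-self-similarity is immediate once the $\omega$'s are available: for odd $c$ one has $\omega_{1,c}/\omega_{2,c}=\bigl(d(\overline{\mathbb S_{(P,u)}})/d(\underline{\mathbb S_{(P,u)}})\bigr)^{2}$, which is $\ne 1$ whenever $p_i\ne p_{s-1-i}$ for some $i$, so no single similarity ratio can be attached to a given level, contrary to Moran's definition. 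I expect the main obstacle to be exactly the diameter computation with its parity bookkeeping: one must locate the extreme points of each restricted block-cylinder of $\mathbb S_{(-s,u)}$, control the sign changes produced by the powers of $-s$, and verify that the two shape factors that occur are precisely $d(\overline{\mathbb S_{(P,u)}})$ and $d(\underline{\mathbb S_{(P,u)}})$; a secondary point needing care is that $\mathbb S_{(-P,u)}$ is a \emph{non-homogeneous} Moran set, so one should invoke the form of the Hua et al.\ results that requires only $c_{*}>0$ (or, alternatively, sandwich $\mathbb S_{(-P,u)}$ between homogeneous Moran sets built from the extremal ratios).
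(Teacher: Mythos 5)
Your proposal follows essentially the same route as the paper: compute the cylinder diameters with the parity bookkeeping (this is the paper's Lemma on cylinders, properties 2--3), extract the four ratio types $\omega_{j,c_i}$ and the state-dependent counts $N_{j,c_i}$ by tracking the parity of $c_1+\dots+c_{i-1}$, and invoke the Hua et al.\ results with $c_{*}>0$ to conclude $\dim_H\mathbb S_{(-P,u)}=\liminf_{k\to\infty}\alpha_k$. The only material addition is your explicit non-self-similarity argument via $\omega_{1,c}/\omega_{2,c}\ne 1$, which the paper asserts but does not spell out; otherwise the two arguments coincide.
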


\section{The notion of the Salem type functions}

A function of the form 
$$
f(x)=f\left(\Delta^2 _{\alpha_1\alpha_2...\alpha_n...}\right)=\beta_{\alpha_1}+ \sum^{\infty} _{n=2} {\left(\beta_{\alpha_n}\prod^{n-1} _{i=1}{p_i}\right)}=y=\Delta^{P_2} _{\alpha_1\alpha_2...\alpha_n...},
$$
where $p_0>0$, $p_1>0$, and $p_0+p_1=1$, is called \emph{the Salem function}. Here $\beta_0=0, \beta_1=p_0$. This is one of the simplest examples of singular functions but its generalizations can be non-differentiable functions or do not have the derivative at points  from  a certain set.  In \cite{Salem1943}, Salem modeled this function for the case when the argument  represented in terms of the $s$-adic representation. 
Let us note that many researches are devoted to the Salem function and its generalizations (for example, see \cite{ACFS2017, Kawamura2010, Symon2015, Symon2017, Symon2019} and references in these papers). 

Let us consider a technique for modeling the Salem type function. Such functions are  the main functions  of the present  investigation. 

Let $\zeta$ be a random variable defined by the s-adic representation
$$
\zeta= \frac{\iota_1}{s}+\frac{\iota_2}{s^2}+\frac{\iota_3}{s^3}+\dots +\frac{\iota_{k}}{s^{k}}+\dots   = \Delta^{s} _{\iota_1\iota_2...\iota_{k}...},
$$
where  digits $\iota_k$ $(k=1,2,3, \dots )$ are random and taking the values $0,1,\dots ,s-1$ with positive probabilities ${p}_{0}, {p}_{1}, \dots , {p}_{s-1}$. That is,  $\iota_k$ are independent and  $P\{\iota_k=\alpha_k\}={p}_{\alpha_k}$, $\alpha_k \in A$.

From the definition of a distribution function and the following expressions 
$$
\{\zeta<x\}=\{\xi_1<\alpha_1(x)\}\cup\{\iota_1=\alpha_1(x),\iota_2<\alpha_2(x)\}\cup \ldots 
$$
$$
\cup\{\iota_1=\alpha_1(x),\iota_2=\alpha_2(x),\dots ,\iota_{k-1}=\alpha_{k-1}(x), \iota_{k}<\alpha_{k}(x)\}\cup \dots,
$$
$$
P\{\iota_1=\alpha_1(x),\iota_2=\alpha_2(x),\dots ,\iota_{k-1}=\alpha_{k-1}(x), \iota_{k}<\alpha_{k}(x)\}
=\beta_{\alpha_{k}(x)}\prod^{k-1} _{j=1} {{p}_{\alpha_{j}(x)}},
$$
where
$$
\beta_{\alpha_k}=\begin{cases}
\sum^{\alpha_k(x)-1} _{i=0} {p_{i}(x)}&\text{whenever $\alpha_k(x)>0$}\\
0&\text{whenever  $\alpha_k(x)=0$,}
\end{cases}
$$
it is easy to see that the following statement is true.

\begin{statement}
The distribution function  ${f}_{\zeta}$ of the random variable $\zeta$ can be represented in the following form
$$
{f}_{\zeta}(x)=\begin{cases}
0&\text{whenever $x< 0$}\\
\beta_{\alpha_1(x)}+\sum^{\infty} _{k=2} {\left({\beta}_{\alpha_k(x)} \prod^{k-1} _{j=1} {{p}_{\alpha_j(x)}}\right)}&\text{whenever $0 \le x<1$}\\
1&\text{whenever $x\ge 1$,}
\end{cases}
$$
where ${p}_{\alpha_{j(x)}}>0$.
\end{statement}

It is easy to see that
$$
x=\Delta^s _{\tilde\alpha_1\tilde\alpha_2...\tilde\alpha_k...}=\frac{1}{s+1}-\Delta^{-s} _{\alpha_1\alpha_2...\alpha_k...}\equiv\frac{1}{s+1}-\sum^{\infty} _{k=1}{\frac{(-1)^k\alpha_{k}}{s^{k}}}=\sum^{\infty} _{k=1}{\frac{\alpha_{2k-1}}{s^{2k-1}}}+\sum^{\infty} _{k=1}{\frac{s-1-\alpha_{2k}}{s^{2k}}}.
$$

Let us consider the following distribution function. 
By analogy, let $\varsigma$ be a random variable defined by the s-adic representation
$$
\varsigma= \Delta^{s} _{\pi_1\pi_2...\pi_{k}...}=\sum^{\infty} _{k=1}{\frac{\pi_k}{s^k}},
$$
where 
$$
\pi_k=\begin{cases}
\alpha_k&\text{if  $k$ is odd}\\
s-1-\alpha_k&\text{if $k$ is even}
\end{cases}
$$
and digits $\pi_k$ $(k=1,2,3, \dots )$ are random and taking the values $0,1,\dots ,s-1$ with positive probabilities ${p}_{0}, {p}_{1}, \dots , {p}_{s-1}$. That is, $\pi_k$ are independent and  $P\{\pi_k=\alpha_k\}={p}_{\alpha_k}$, $P\{\pi_k=s-1-\alpha_k\}={p}_{s-1-\alpha_k}$, where $\alpha_k \in A$. From the definition of a distribution function and the following expressions 
$$
\{\varsigma<x\}=\{\pi_1<\alpha_1(x)\}\cup\{\pi_1=\alpha_1(x),\pi_2<s-1-\alpha_2(x)\}\cup \ldots
$$
$$
\ldots \cup\{\pi_1=\alpha_1(x),\pi_2=s-1-\alpha_2(x),\ldots,\pi_{2k-1}<\alpha_{2k-1}(x)\}\cup
$$
$$
\cup\{\pi_1=\alpha_1(x),\pi_2=s-1-\alpha_2(x),\ldots,\pi_{2k-1}=\alpha_{2k-1}(x),\pi_{2k}<s-1-\alpha_{2k}(x)\}\cup\ldots,
$$
$$
P\{\pi_1=\alpha_1(x),\pi_2=s-1-\alpha_2(x),\ldots,\pi_{2k-1}<\alpha_{2k-1}(x)\}=\beta_{\alpha_{2k-1}(x)}\prod^{2k-2} _{j=1} {\tilde{p}_{\alpha_{j}(x)}}
$$
and
$$
P\{\pi_1=\alpha_1(x),\pi_2=s-1-\alpha_2(x),\dots,\pi_{2k}<s-1-\alpha_{2k}(x)\}=\beta_{s-1-\alpha_{2k}(x)}\prod^{2k-1} _{j=1} {\tilde{p}_{\alpha_{j}(x)}},
$$
we have the following statement. 
\begin{statement}
The distribution function  ${\tilde F}_{\varsigma}$ of the random variable $\varsigma$ can be represented in the following form
$$
{\tilde F}_{\varsigma}(x)=\begin{cases}
0&\text{whenever $x< 0$}\\
\tilde\beta_{\alpha_1(x)}+\sum^{\infty} _{k=2} {\left({\tilde\beta}_{\alpha_k(x)} \prod^{k-1} _{j=1} {{\tilde p}_{\alpha_j(x)}}\right)}&\text{whenever $0 \le x<1$}\\
1&\text{whenever $x\ge 1$,}
\end{cases}
$$
where ${p}_{\alpha_{j(x)}}>0$,
\end{statement}
$$
\tilde  p_{\alpha_k}=\begin{cases}
p_{\alpha_k}&\text{if  $k$ is odd}\\
p_{s-1-\alpha_k}&\text{if $k$ is even},
\end{cases}
$$
and
$$
\tilde\beta_{\alpha_k}=\begin{cases}
\beta_{\alpha_k}&\text{if  $k$ is odd}\\
\beta_{s-1-\alpha_k}&\text{if $k$ is even}.
\end{cases}
$$
The function  
$$ 
{\tilde F}(x)=\beta_{\alpha_1(x)}+\sum^{\infty} _{n=2} {\left({\tilde\beta}_{\alpha_n(x)}\prod^{n-1} _{j=1} {{\tilde p}_{\alpha_j(x)}}\right)},
$$
is a partial case of the function investigated in \cite{Symon2019}.

\begin{definition} 
Any function is said to be \emph{a function of the Salem type} if it is modeled by the mentioned technique. That is, the Salem type function is a distribution function of the  random variable $\eta=\Delta_{\xi_1\xi_2...\xi_k...}$ defined by a certain  representation $\Delta_{i_1i_2...i_k...}$ of real numbers, where digits $\xi_k$ $(k=1,2,3, \dots )$ are random and taking the values $0,1,\dots ,m_k$ with positive probabilities ${p}_{0}, {p}_{1}, \dots , {p}_{m_k}$ ($\xi_k$ are independent and  $P\{\xi_k=i_k\}={p}_{i_k}$, $i_k \in A_k=\{0,1,\dots, m_k\}$, $m_k\in \mathbb N\cup\{\infty\}$). In addition, some generalizations of the Salem type functions (for example, the last functions for which $p_i\in (-1,1)$) also are called \emph{functions of  the Salem type}. 
\end{definition}

By analogy, let $\eta$ be a random variable defined by the s-adic representation
$$
\eta= \Delta^{s} _{\xi_1\xi_2...\xi_{k}...}=\sum^{\infty} _{k=1}{\frac{\xi_k}{s^k}},
$$
where 
$$
\xi_k=\begin{cases}
\alpha_k&\text{if  $k$ is even}\\
s-1-\alpha_k&\text{if $k$ is odd}
\end{cases}
$$
and digits $\xi_k$ $(k=1,2,3, \dots )$ are random and taking the values $0,1,\dots ,s-1$ with positive probabilities ${p}_{0}, {p}_{1}, \dots , {p}_{s-1}$. Then the distribution function  ${\ddot F}_{\eta}$ of the random variable $\eta$ can be represented in the following form
$$
{\ddot F}_{\eta}(x)=\begin{cases}
0&\text{whenever $x< 0$}\\
\beta_{s-1-\alpha_1(x)}+\sum^{\infty} _{k=2} {\left({\ddot\beta}_{\alpha_k(x)} \prod^{k-1} _{j=1} {{\ddot p}_{\alpha_j(x)}}\right)}&\text{whenever $0 \le x<1$}\\
1&\text{whenever $x\ge 1$,}
\end{cases}
$$
where ${p}_{\alpha_{j(x)}}>0$,
$$
\ddot  p_{\alpha_k}=\begin{cases}
p_{\alpha_k}&\text{if  $k$ is even}\\
p_{s-1-\alpha_k}&\text{if $k$ is odd},
\end{cases}
$$
and
$$
\ddot\beta_{\alpha_k}=\begin{cases}
\beta_{\alpha_k}&\text{if  $k$ is  even}\\
\beta_{s-1-\alpha_k}&\text{if $k$ is odd}.
\end{cases}
$$

\section{$P$- and nega-$P$-representations}

Let us note that in  the last decades, there exists   the tendency to modeling and studying   numeral  systems defined in terms of alternating expansions of real numbers under the condition that numeral systems defined in terms of the corresponding positive expansions of real numbers are  known.  For example, a number of researchers investigate positive and alternating $\beta$-expansions, L\"uroth series, Engel series, etc. Let us remark that positive and alternating $\beta$-expansions are the  s-adic and nega-s-adic representations with $s=\beta>1$, $\beta\in\mathbb R$. The notion of $\beta$-expansions  was introduced by A.~R\'enyi in 1957 in the paper~\cite{Renyi1957}, but $(-\beta)$-expansions  were introduced in the  paper~\cite{Ito2009} in 2009. In addition, in 1883, the German mathematician J. L\"uroth introduced an expansion of a real number in the form of  a special series (now these series are called  L\"uroth series). However, in 1990, S. Kalpazidou, A. Knopfmacher, and J. Knopfmacher introduced
alternating L\"uroth series  in the paper~\cite{KKK1990}. 

Peculiarities of such numeral systems are the following: the similarity of some properties for mathematical objects defined in terms of corresponding positive and alternating expansions;   the complexity of proofs  of corresponding statements for the case of such alternating representations. Let us note that discovering differences in properties of fractals which are defined in terms of a certain type of postive and alternating expansions of real numbers ($P$- and nega-$P$-representations), is one of the aims of this paper.

The function
$$ 
{f}(x)=\beta_{\alpha_1(x)}+\sum^{\infty} _{n=2} {\left({\beta}_{\alpha_n(x)}\prod^{n-1} _{j=1} {{p}_{\alpha_j(x)}}\right)},
$$
can be used as a representation (\emph{the P-representation}) of numbers from $[0,1]$. That is,
$$
y=\Delta^{P} _{\alpha_1(x)\alpha_2(x)...\alpha_n(x)...}=f(x)=\beta_{\alpha_1(x)}+\sum^{\infty} _{n=2} {\left({\beta}_{\alpha_n(x)}\prod^{n-1} _{j=1} {{p}_{\alpha_j(x)}}\right)},
$$
where $P=\{p_0,p_1,\dots , p_{s-1}\}$, $p_0+p_1+\dots+p_{s-1}=1$, and $p_i>0$ for all $i=\overline{0,s-1}$.  In other words, this function ``preserves" digits of representations of numbers:
$$
f: x=\Delta^{s} _{\alpha_1(x)\alpha_2(x)...\alpha_n(x)...}\to \Delta^{P} _{\alpha_1(x)\alpha_2(x)...\alpha_n(x)...}=\Delta^{P} _{\alpha_1(y)\alpha_2(y)...\alpha_n(y)...}=y.
$$

We begin with  the nega-P- and  P-representations  of numbers from $[0,1]$:

\begin{equation}
\label{nega}
x=\sum^{\alpha_1-1} _{i=0}{p_{i}}+\sum^{\infty} _{n=2}{\left((-1)^{n-1}\tilde \delta_{\alpha_n}\prod^{n-1} _{j=1}{\tilde p_{\alpha_j}}\right)}+\sum^{\infty} _{n=1}{\left(\prod^{2n-1} _{j=1}{\tilde p_{\alpha_j}}\right)}\equiv\Delta^{-P} _{\alpha_1(x)\alpha_2(x)...\alpha_n(x)...}
\end{equation}
and 
\begin{equation}
\label{P}
x=\Delta^{P} _{\alpha_1(x)\alpha_2(x)...\alpha_n(x)...}=\beta_{\alpha_1(x)}+\sum^{\infty} _{n=2} {\left({\beta}_{\alpha_n(x)}\prod^{n-1} _{j=1} {{p}_{\alpha_j(x)}}\right)}
\end{equation}
for which
$$
\Delta^{-P} _{\alpha_1\alpha_2...\alpha_n...}=\Delta^{P} _{\alpha_1[s-1-\alpha_2]...\alpha_{2k-1}[s-1-\alpha_{2k}]...},
$$
where
$$
\tilde \delta_{\alpha_n}=\begin{cases}
\sum^{s-1} _{i=s-1-\alpha_{n}} {p_{i}}&\text{if $n$ is even}\\
0&\text{if $n$ is odd and $\alpha_{n}=0$}\\
\sum^{\alpha_n-1} _{i=0}{p_{i}}&\text{if $n$ is odd and  $\alpha_{n}\ne0$},\\
\end{cases}
$$
$$
\tilde  p_{\alpha_n}=\begin{cases}
p_{\alpha_n}&\text{if  $n$ is odd}\\
p_{s-1-\alpha_n}&\text{if $n$ is even},
\end{cases}
$$
and
$$
\beta_{\alpha_n}=\begin{cases}
\sum^{\alpha_n(x)-1} _{i=0} {p_{i}(x)}&\text{whenever $\alpha_n(x)>0$}\\
0&\text{whenever  $\alpha_n(x)=0$.}
\end{cases}
$$
Here $s>1$ is a fixed positive integer and $\alpha_n\in A=\{0,1,\dots, s-1\}$. Also,  $P=\{p_0,p_1,\dots , p_{s-1}\}$ is a fixed set whose elements satisfying the following properties: $p_0+p_1+\dots+p_{s-1}=1$ and $p_i>0$ for all $i=\overline{0,s-1}$. 

\begin{definition}
A representation of form~\eqref{nega}  is called \emph{the nega-P-representation of numbers from~$[0,1]$} (this representation is a partial case of the nega-$\tilde Q$-representation considered in the papers \cite{Serbenyuk2016, Symon2019} and of the quasi-nega-$\tilde Q$-representation (\cite{S.Serbenyuk})).

A representation of form~\eqref{P}  is called \emph{the P-representation of numbers from~$[0,1]$}. This representation is a partial case of some representations noted in \cite{S.Serbenyuk, Symon2019}. 
\end{definition}

 In the present article, the main attention is given to properties of sets of the following form:
\begin{equation*}
\mathbb S_{(-P,u)}=\left\{x: x= \Delta^{-P}_{{\underbrace{u\ldots u}_{\alpha_1-1}} \alpha_1{\underbrace{u\ldots u}_{\alpha_2 -1}}\alpha_2 ...{\underbrace{u\ldots u}_{ \alpha_n -1}}\alpha_n...},   \alpha_n \in A_0=\{1,2,\dots , s-1\}\setminus\{u\},  \right\},
\end{equation*}
where $s>3$ is  a fixed positive integer,  $u=\overline{0,s-1}$, and the parameters $u$ and $s$ are fixed for the fixed set $\mathbb  S_{(-P,u)}$.

Let us consider the P-representation  and the nega-P-representation more detail. Let $s$ be a fixed  positive integer,  $s> 1$,   
and $c_1, c_2,\dots ,c_m$ be an ordered tuple of integers such that $c_i\in\{0,1,\dots ,s-1\}$ for $i=\overline{1,m}$.

\begin{definition} 
{\itshape A P-cylinder (or nega-P-cylinder) of rank $m$ with  base $c_1c_2\ldots c_m$} is a set $\Delta^{P} _{c_1c_2\ldots c_m}$ (or $\Delta^{-P} _{c_1c_2\ldots c_m}$) formed by all numbers of the segment  $[0,1]$ with P-representations (or nega-P-representations) in which the first $m$ digits coincide with $c_1,c_2,\dots ,c_m$, respectively, i.e.,
$$
\Delta^{P} _{c_1c_2\ldots c_m}=\left\{x: x=\Delta^{P} _{\alpha_1\alpha_2\ldots\alpha_n\ldots}, \alpha_j=c_j, j=\overline{1,m}\right\}
$$
or
$$
\Delta^{-P} _{c_1c_2\ldots c_m}=\left\{x: x=\Delta^{-P} _{\alpha_1\alpha_2\ldots\alpha_n\ldots}, \alpha_j=c_j, j=\overline{1,m}\right\}
$$
\end{definition}

\begin{lemma}
Cylinders $\Delta^{P} _{c_1c_2\ldots c_m}$,  $\Delta^{-P} _{c_1c_2\ldots c_m}$ have the following properties: 

\begin{enumerate}

\item an arbitrary cylinder $\Delta^{P} _{c_1c_2\ldots c_m}$ or $\Delta^{-P} _{c_1c_2\ldots c_m}$ is a closed interval;
\item the following relationships hold:
$$
\inf \Delta^{P} _{c_1c_2\ldots c_m}= \Delta^{P} _{c_1c_2\ldots c_m000...},
\sup \Delta^{P} _{c_1c_2\ldots c_m}= \Delta^{P} _{c_1c_2\ldots c_m[s-1][s-1][s-1]...};
$$
$$
\inf \Delta^{- P} _{c_1c_2...c_n}=\begin{cases}
\Delta^{-P} _{c_1c_2...c_n[s-1]0[s-1]0[s-1]...}&\text{if $n$ is odd}\\
\Delta^{-P} _{c_1c_2...c_n0[s-1]0[s-1]0[s-1]...}&\text{if $n$ is even}
\end{cases},
$$
$$
\sup \Delta^{- P} _{c_1c_2...c_n}=\begin{cases}
\Delta^{-P} _{c_1c_2...c_n0[s-1]0[s-1]0[s-1]...}&\text{if $n$ is odd}\\
\Delta^{-P} _{c_1c_2...c_n[s-1]0[s-1]0[s-1]...}&\text{if $n$ is even}
\end{cases};
$$
\item
$$
| \Delta^{P} _{c_1c_2\ldots c_m}|=p_{c_1}p_{c_2}\cdots p_{c_m},~~~~~| \Delta^{-P} _{c_1c_2\ldots c_m}|=\tilde p_{c_1}\tilde p_{c_2}\cdots \tilde p_{c_m}; 
$$
\item
$$
 \Delta^{P} _{c_1c_2\ldots c_mc}\subset  \Delta^{P} _{c_1c_2\ldots c_m}, ~~~~~ \Delta^{-P} _{c_1c_2\ldots c_mc}\subset  \Delta^{-P} _{c_1c_2\ldots c_m};
$$
\item
$$
 \Delta^{P} _{c_1c_2\ldots c_m}=\bigcup^{s-1} _{c=0} { \Delta^{P} _{c_1c_2\ldots c_mc}}, ~~~~~ \Delta^{-P} _{c_1c_2\ldots c_m}=\bigcup^{s-1} _{c=0} { \Delta^{-P} _{c_1c_2\ldots c_mc}};
$$
\item
$$
\lim_{m \to \infty} { |\Delta^{P} _{c_1c_2\ldots c_m}|}=\lim_{m \to \infty} { |\Delta^{-P} _{c_1c_2\ldots c_m}|}=0;
$$
\item
$$
\frac{| \Delta^{P} _{c_1c_2\ldots c_mc_{m+1}}|}{| \Delta^{P} _{c_1c_2\ldots c_m}|}=p_{c_{m+1}}, ~~~~~\frac{| \Delta^{-P} _{c_1c_2\ldots c_mc_{m+1}}|}{| \Delta^{-P} _{c_1c_2\ldots c_m}|}=\tilde p_{c_{m+1}};
$$
\item for any $m\in\mathbb N$
$$
\sup\Delta^{P} _{c_1c_2...c_mc}=\inf  \Delta^{P} _{c_1c_2...c_m[c+1]},~~~\text{where $c \ne s-1$;}
$$
$$
\sup\Delta^{-P} _{c_1c_2...c_{m-1}c}=\inf\Delta^{-P} _{c_1c_2...c_{m-1}[c+1]}~~~ \text{if $m$ is odd}, 
$$
$$
\sup \Delta^{-P} _{c_1c_2...c_{m-1}[c+1]}=\inf \Delta^{-P} _{c_1c_2...c_{m-1}c}, ~~~\text{if $m$ is even};
$$
\item  for an arbitrary $x\in [0,1]$ 
$$
\bigcap^{\infty} _{m=1} {\Delta^{P} _{c_1c_2\ldots c_m}}=x=\Delta^{P} _{c_1c_2\ldots  c_m\ldots}~~~\text{and}~~~\bigcap^{\infty} _{m=1} {\Delta^{-P} _{c_1c_2\ldots c_m}}=x=\Delta^{-P} _{c_1c_2\ldots c_m\ldots}.
$$
\item for any $x_1,x_2\in [0,1]$, the following equalities are true:
$$
x_1=\Delta^{-P} _{\alpha_1\alpha_2...\alpha_n...}\equiv \Delta^{P} _{\alpha_1[s-1-\alpha_2]...\alpha_{2k-1}[s-1-\alpha_{2k}]...},  
$$
$$
x_2=\Delta^{P} _{\alpha_1\alpha_2...\alpha_n...}\equiv\Delta^{-P} _{\alpha_1[s-1-\alpha_2]...\alpha_{2k-1}[s-1-\alpha_{2k}]...}.
$$
\end{enumerate}
\end{lemma}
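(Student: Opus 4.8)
The plan is to establish each of the ten properties by reducing everything to the corresponding facts about the ordinary $s$-adic and nega-$s$-adic expansions, transported through the identity
$$
\Delta^{-P}_{\alpha_1\alpha_2\ldots}=\Delta^{P}_{\alpha_1[s-1-\alpha_2]\alpha_3[s-1-\alpha_4]\ldots}
$$
and the analogous relation between $\Delta^{P}$ and the $s$-adic expansion recorded just before the statement. Properties (1), (3), (4), (5), (6), (7) are essentially multiplicative bookkeeping: once one knows that $\Delta^{P}_{c_1\ldots c_m}$ is the set of $y=f(x)$ with $x$ ranging over the $s$-adic cylinder $\Delta^{s}_{c_1\ldots c_m}$ and that $f$ is a nondecreasing surjection of $[0,1]$ onto itself (this is the distribution function $f_\zeta$ from the Statement in Section~2, strictly increasing since all $p_i>0$), one gets that $\Delta^{P}_{c_1\ldots c_m}$ is a closed interval with $|\Delta^{P}_{c_1\ldots c_m}|=p_{c_1}\cdots p_{c_m}$ directly from $P\{\iota_1=c_1,\ldots,\iota_m=c_m\}=p_{c_1}\cdots p_{c_m}$; the nesting, the decomposition into $s$ subcylinders, the ratio formula, and the vanishing of diameters then follow at once. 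For the nega-$P$ cylinders I would feed the same argument through the digit-flipping map on even positions: $|\Delta^{-P}_{c_1\ldots c_m}|=\tilde p_{c_1}\cdots\tilde p_{c_m}$ because flipping $c_{2k}\mapsto s-1-c_{2k}$ turns the product $p_{c_1}\cdots p_{c_m}$ of the associated $P$-cylinder into $\tilde p_{c_1}\cdots\tilde p_{c_m}$ by the very definition of $\tilde p$.

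Property (2) — the explicit endpoints — is where the alternating structure genuinely matters and is the first place care is needed. For the $P$-case, monotonicity of $f$ gives $\inf$ at the all-zeros tail and $\sup$ at the all-$(s-1)$ tail, immediate. For the nega-$P$-case I would argue that the partial-sum tail $\sum_{n>m}(-1)^{n-1}\tilde\delta_{\alpha_n}\prod\tilde p_{\alpha_j}+\sum(\prod^{2n-1}\tilde p_{\alpha_j})$ is, for fixed prefix, an alternating-type expression maximized/minimized by specific digit choices: because the sign alternates with $n$, making the first free digit as large as possible and the next as small as possible (etc.) extremizes the value, and the parity of $m$ determines which of "large-first" vs "small-first" applies — hence the split into the $n$ odd / $n$ even cases with tails $[s-1]0[s-1]0\ldots$ and $0[s-1]0[s-1]\ldots$. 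Cleanest is to transport this through the $\Delta^{-P}\!=\!\Delta^{P}$ identity: the nega-$P$ tail $\Delta^{-P}_{\ldots c_n t_1t_2t_3\ldots}$ corresponds to a $P$-expansion whose tail digits are $t_1,[s-1-t_2],t_3,[s-1-t_4],\ldots$ (with a further parity shift if $n$ is odd vs even), and maximizing the $P$-value means maximizing every transported tail digit, which unflips to the stated alternating patterns. Property (8), the "cylinders abut" claim, then drops out of (2): $\sup\Delta^{P}_{c_1\ldots c_m c}$ and $\inf\Delta^{P}_{c_1\ldots c_m[c+1]}$ are both computed from (2) and coincide because $\beta_{c+1}=\beta_c+p_c$ makes the two telescoping sums equal; the nega-$P$ version is the same computation after accounting for the even-position flip, which is exactly why the roles of $c$ and $c+1$ swap when $m$ is even.

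Property (9) is just the statement that the nested intersection of closed intervals whose lengths tend to $0$ (by (6)) is a single point, together with the fact — again from the bijection with $s$-adic / nega-$s$-adic expansions — that the point it contains is precisely the one with digit string $c_1c_2\ldots$; one should remark on the countably many two-representation points (where an expansion ends in all $0$'s versus all $(s-1)$'s, resp.\ the nega-analogue), but this does not affect the intersection claim since both representations of such a point lie in all the relevant cylinders. Property (10) is simply the conversion formula between $P$- and nega-$P$-representations already displayed in the excerpt, valid for every $x\in[0,1]$ by construction of \eqref{nega} and \eqref{P}; I would either cite it as established when those representations were introduced or verify it in one line by substituting $\tilde p$, $\tilde\delta$, $\beta$ into both sides.

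The main obstacle is Property (2) for the nega-$P$ cylinders: keeping the parity bookkeeping straight — which free digit should be $s-1$ and which should be $0$, and how this interacts with the even-position flip built into the nega-$P$ representation — is the one genuinely error-prone step, and everything else (including (8)) is downstream of getting (2) right. I expect to handle it by doing the $P$-case first, where monotonicity makes it trivial, and then pushing the nega-$P$ case through the explicit $\Delta^{-P}=\Delta^{P}$ dictionary rather than re-deriving extremal tails from scratch.
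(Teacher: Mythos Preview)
Your proposal is sound, and in fact it goes well beyond what the paper itself does: the paper states this lemma without proof, treating the ten properties as immediate consequences of the definitions of the $P$- and nega-$P$-representations (equations~\eqref{P} and~\eqref{nega}) together with the identity $\Delta^{-P}_{\alpha_1\alpha_2\ldots}=\Delta^{P}_{\alpha_1[s-1-\alpha_2]\alpha_3[s-1-\alpha_4]\ldots}$. Your strategy of transporting the $s$-adic cylinder structure through the strictly increasing distribution function $f_\zeta$ to get the $P$-case, and then pushing to the nega-$P$ case via the even-position digit flip, is exactly the natural route and would supply the details the paper omits; your identification of the parity bookkeeping in Property~(2) as the only genuinely delicate step is also accurate.
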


\begin{definition} A number $x \in[0,1]$ is called   {\itshape P-rational} if 
$$
x=\Delta^{P} _{\alpha_1\alpha_2\ldots\alpha_{n-1}\alpha_n000\ldots}
$$
or
$$
x=\Delta^{P} _{\alpha_1\alpha_2\ldots\alpha_{n-1}[\alpha_n-1][s-1][s-1][s-1]\ldots}.
$$
The  other  numbers in $[0,1]$ are called {\itshape P-irrational}.
\end{definition}

\begin{definition}
 Numbers from some countable subset of $[0,1]$ have two different nega-P-representations, i.e., 
$$
\Delta^{-P} _{\alpha_1\alpha_2...\alpha_{n-1}\alpha_n[s-1]0[s-1]0[s-1]...}=\Delta^{-P} _{\alpha_1\alpha_2...\alpha_{n-1}[\alpha_n-1]0[s-1]0[s-1]...}, ~\alpha_n\ne 0.
$$
These numbers are called  \emph{nega-P-rational}, and other numbers from $[0,1]$ are called  \emph{nega-$P$-irrational}.
\end{definition}

\section{Sets $\mathbb S_{(-P,u)}$ as images of certain fractals under the action of the Salem type function}

Let us consider  the sets 
\begin{equation*}
\mathbb S_{(s,u)}=\left\{x: x= \Delta^{s}_{{\underbrace{u\ldots u}_{\alpha_1-1}} \alpha_1{\underbrace{u\ldots u}_{\alpha_2 -1}}\alpha_2 ...{\underbrace{u\ldots u}_{ \alpha_n -1}}\alpha_n...},   \alpha_n \in A_0=\{1,2,\dots , s-1\}\setminus\{u\} \right\}
\end{equation*}
and
\begin{equation*}
\mathbb S_{(-s,u)}=\left\{x: x= \Delta^{-s}_{{\underbrace{u\ldots u}_{\alpha_1-1}} \alpha_1{\underbrace{u\ldots u}_{\alpha_2 -1}}\alpha_2 ...{\underbrace{u\ldots u}_{ \alpha_n -1}}\alpha_n...},  \alpha_n \in A_0=\{1,2,\dots , s-1\}\setminus\{u\}  \right\}
\end{equation*}
where $u=\overline{0,s-1}$, the parameters $u$ and $2<s\in\mathbb N$ are fixed for the fixed sets $\mathbb  S_{(s,u)}$, $\mathbb  S_{(-s,u)}$.

Let $x\in\mathbb S_{(s,u)}$. Let us consider the image 
$$
\tilde y=\tilde F \circ f_l\circ f_+(x),
$$
where 
$$
f_+: x=\Delta^s _{\alpha_1\alpha_2...\alpha_n...}\to \Delta^{-s} _{\alpha_1\alpha_2...\alpha_n...}=y
$$
 is not monotonic on the domain and is a nowhere differentiable function~(\cite{S. Serbenyuk functions with complicated local structure 2013}), $f_l(y)=\frac{1}{s+1}-y$, and $\tilde F$ is the distribution function described earlier. That is, in this paper, the main attention is given to properties and a  local structure of a set of the form:
\begin{eqnarray*}
\mathbb S_{(-P,u)}&=\left\{\tilde y: \tilde y=\tilde F \circ f_l\circ f_+(x), x\in\mathbb S_{(s,u)} \right\}\\
&=\left\{x: x= \Delta^{-P}_{{\underbrace{u\ldots u}_{\alpha_1-1}} \alpha_1{\underbrace{u\ldots u}_{\alpha_2 -1}}\alpha_2 ...{\underbrace{u\ldots u}_{ \alpha_n -1}}\alpha_n...}, \alpha_n \ne u, \alpha_n \ne 0 \right\}. 
\end{eqnarray*}
In other words,
$$
 \Delta^{-P}_{{\underbrace{u\ldots u}_{\alpha_1-1}} \alpha_1{\underbrace{u\ldots u}_{\alpha_2 -1}}\alpha_2 ...{\underbrace{u\ldots u}_{ \alpha_n -1}}\alpha_n...}= \Delta^{P}_{{\underbrace{\tilde u\ldots \tilde u}_{\alpha_1-1}} \tilde \alpha_1{\underbrace{\tilde u\ldots \tilde u}_{\alpha_2 -1}}\tilde \alpha_2 ...{\underbrace{\tilde u\ldots \tilde u}_{ \alpha_n -1}}\tilde \alpha_n...},
$$
where
$$
\tilde \alpha_n=\begin{cases}
\alpha_n &\text{whenever $n$ is odd}\\
s-1-\alpha_n &\text{whenever $n$ is even}
\end{cases}
$$
and
$$
\tilde u=\begin{cases}
u &\text{whenever $u$ is situated  at an odd position in the representation}\\
s-1-u &\text{whenever $u$ is situated at  in an even position in the representation}
\end{cases}.
$$

\begin{remark}
Since properties of the set 
$$
\mathbb S_{(-s,u)}=\left\{x: x= \Delta^{-s}_{{\underbrace{u\ldots u}_{\alpha_1-1}} \alpha_1{\underbrace{u\ldots u}_{\alpha_2 -1}}\alpha_2 ...{\underbrace{u\ldots u}_{ \alpha_n -1}}\alpha_n...},   \alpha_n \ne u, \alpha_n \ne 0 \right\}
$$
(here also $u=\overline{0,s-1}$, the parameters $u$ and $s$ are fixed for the set $\mathbb  S_{(-s,u)}$) were investigated (see~\cite{S. Serbenyuk 2013, {S. Serbenyuk 2017  fractals}, {S. Serbenyuk fractals}}), one can consider the set of images
$$
\{z: z=\tilde F\circ f_l (x),~~~ x\in \mathbb S_{(-s,u)}\}\equiv \mathbb S_{(-P,u)}.
$$
\end{remark}

Let us remark that
$$
\inf {\mathbb S_{(-s,u)}}=\begin{cases}
\Delta^{-s} _{1(02)}&\text{if $u=0$}\\
\Delta^{-s} _{113(12)}&\text{if $u=1$}\\
\Delta^{-s} _{(u2)}&\text{if $u\in\{2,3,\dots , s-1\}$}
\end{cases}
$$
and
$$
\sup {\mathbb S_{(-s,u)}}=\begin{cases}
\Delta^{-s} _{(u2)}&\text{if $u\in\{0,1\}$}\\
\Delta^{-s} _{1(u2)}&\text{if $u\in\{2,3,\dots , s-1\}$}.
\end{cases}
$$
Here $(\alpha)$ is period.

\section{Some auxiliary notes}

In this section, the main attention is given to notes and calculations which are useful for proving the main results.

Suppose $x\in \mathbb S_{(-s,u)}$. Then consider the set 
$$
\underline{\mathbb S_{(s,u)}}\equiv\left\{\underline{y}:\underline{y}=\Delta^s _{([s-1]0)}+x, ~~~x\in \mathbb S_{(-s,u)}\right\}. 
$$
We have
$$
\inf {\underline{\mathbb S_{(s,u)}}}=\begin{cases}
\Delta^{s} _{[s-2](0[s-3])}&\text{if $u=0$}\\
\Delta^{s} _{[s-2]1[s-4](1[s-3])}&\text{if $u=1$}\\
\Delta^{s} _{([s-1-u]2)}&\text{if $u\in\{2,3,\dots , s-1\}$}
\end{cases}
$$
and
$$
\sup {\underline{\mathbb S_{(s,u)}}}=\begin{cases}
\Delta^{s} _{([s-1-u]2)}&\text{if $u\in\{0,1\}$}\\
\Delta^{s} _{[s-2](u[s-3])}&\text{if $u\in\{2,3,\dots , s-1\}$}.
\end{cases}
$$

Suppose $x\in \mathbb S_{(-s,u)}$. Then consider the set 
$$
\overline{\mathbb S_{(s,u)}}\equiv\left\{\overline{y}:\overline{y}=\Delta^s _{(0[s-1])}-x, ~~~x\in \mathbb S_{(-s,u)}\right\}. 
$$
We obtain
$$
\inf {\overline{\mathbb S_{(s,u)}}}=\begin{cases}
\Delta^{s} _{(u[s-3])}&\text{if $u\in\{0,1\}$}\\
\Delta^{s} _{1([s-1-u]2)}&\text{if $u\in\{2,3,\dots , s-1\}$}
\end{cases}
$$
and
$$
\sup {\overline{\mathbb S_{(s,u)}}}=\begin{cases}
\Delta^{s} _{1([s-1]2)}&\text{if $u=0$}\\
\Delta^{s} _{1[s-2]3([s-2]2)}&\text{if $u=1$}\\
\Delta^{s} _{(u[s-3])}&\text{if $u\in\{2,3,\dots , s-1\}$}
\end{cases}
$$

For the present investigation, the following sets  are auxiliary sets: 
$$
\overline{\mathbb S_{(P,u)}}\equiv\{\overline{z}: \overline{z}=\tilde F(\overline{y}), ~~~\overline{y}\in \overline{\mathbb S_{(s,u)}}\}\equiv \left\{\overline{z}: \overline{z}=\Delta^{P}_{{\underbrace{\tilde u\ldots \tilde u}_{\alpha_1-1}} \tilde \alpha_1{\underbrace{\tilde u\ldots \tilde u}_{\alpha_2 -1}}\tilde \alpha_2 ...{\underbrace{\tilde u\ldots \tilde u}_{ \alpha_n -1}}\tilde \alpha_n...}\right\},
$$
$$
\underline{\mathbb S_{(P,u)}}\equiv\{\underline{z}: \underline{z}=\ddot F(\underline{y}), ~~~\underline{y}\in \underline{\mathbb S_{(s,u)}}\}\equiv \left\{\underline{z}: \underline{z}=\Delta^{P}_{{\underbrace{\ddot  u\ldots \ddot u}_{\alpha_1-1}} \ddot  \alpha_1{\underbrace{\ddot  u\ldots \ddot  u}_{\alpha_2 -1}}\ddot  \alpha_2 ...{\underbrace{\ddot  u\ldots \ddot  u}_{ \alpha_n -1}}\ddot  \alpha_n...}\right\},
$$
where
$$
\ddot \alpha_n=\begin{cases}
\alpha_n &\text{whenever $n$ is even}\\
s-1-\alpha_n &\text{whenever $n$ is odd}
\end{cases}
$$
and
$$
\ddot u=\begin{cases}
u &\text{whenever $u$ is situated  at an even position in the representation}\\
s-1-u &\text{whenever $u$ is situated at  in an odd position in the representation}
\end{cases}.
$$

So, one can note the following statement.
\begin{lemma}
For the sets $\underline{\mathbb S_{(P,u)}}$ and $\overline{\mathbb S_{(P,u)}}$, the following equalities hold: 
$$
\inf {\underline{\mathbb S_{(P,u)}}}=\begin{cases}
\Delta^{P} _{[s-2](0[s-3])}&\text{if $u=0$}\\
\Delta^{P} _{[s-2]1[s-4](1[s-3])}&\text{if $u=1$}\\
\Delta^{P} _{([s-1-u]2)}&\text{if $u\in\{2,3,\dots , s-1\}$}
\end{cases}
$$
and
$$
\sup {\underline{\mathbb S_{(P,u)}}}=\begin{cases}
\Delta^{P} _{([s-1-u]2)}&\text{if $u\in\{0,1\}$}\\
\Delta^{P} _{[s-2](u[s-3])}&\text{if $u\in\{2,3,\dots , s-1\}$},
\end{cases}
$$
$$
\inf {{\mathbb S_{(P,u)}}}\equiv \inf {\overline{\mathbb S_{(P,u)}}}=\begin{cases}
\Delta^{P} _{(u[s-3])}&\text{if $u\in\{0,1\}$}\\
\Delta^{P} _{1([s-1-u]2)}&\text{if $u\in\{2,3,\dots , s-1\}$}
\end{cases}
$$
and
$$
\sup {{\mathbb S_{(-P,u)}}}\equiv \sup {\overline{\mathbb S_{(P,u)}}}=\begin{cases}
\Delta^{P} _{1([s-1]2)}&\text{if $u=0$}\\
\Delta^{P} _{1[s-2]3([s-2]2)}&\text{if $u=1$}\\
\Delta^{P} _{(u[s-3])}&\text{if $u\in\{2,3,\dots , s-1\}$}
\end{cases}
$$
\end{lemma}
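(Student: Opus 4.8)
The plan is to obtain all the displayed values by pushing the already-recorded extrema of $\underline{\mathbb S_{(s,u)}}$ and $\overline{\mathbb S_{(s,u)}}$ through the Salem type maps $\ddot F$ and $\tilde F$, exploiting only that these maps are continuous and nondecreasing. Nondecreasingness is automatic, since $\ddot F$ and $\tilde F$ are distribution functions of random variables; continuity holds because every generating probability $p_i$ is positive, so the digit distributions are nondegenerate, the associated product measures are nonatomic, and hence the distribution functions have no jumps. Consequently, for any compact $K\subset[0,1]$ the image $\ddot F(K)$ is compact and $\inf \ddot F(K)=\ddot F(\min K)$, $\sup\ddot F(K)=\ddot F(\max K)$ (the extrema of $\ddot F(K)$ are attained, and by monotonicity they are attained at the extrema of $K$); the same holds with $\tilde F$ in place of $\ddot F$.

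First I would record compactness. The set $\mathbb S_{(-s,u)}$ is perfect, hence closed, and bounded by Theorem~\ref{th: theorem1}, so its affine images $\underline{\mathbb S_{(s,u)}}=\tfrac{s}{s+1}+\mathbb S_{(-s,u)}$ and $\overline{\mathbb S_{(s,u)}}=\tfrac1{s+1}-\mathbb S_{(-s,u)}$ are compact subsets of $[0,1]$, and therefore $\underline{\mathbb S_{(P,u)}}=\ddot F\left(\underline{\mathbb S_{(s,u)}}\right)$ and $\overline{\mathbb S_{(P,u)}}=\tilde F\left(\overline{\mathbb S_{(s,u)}}\right)$ are compact as well. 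Since $y\mapsto\tfrac{s}{s+1}+y$ is increasing and $y\mapsto\tfrac1{s+1}-y$ is decreasing, the $\inf$ and $\sup$ of $\underline{\mathbb S_{(s,u)}}$ and of $\overline{\mathbb S_{(s,u)}}$ are exactly the expressions listed immediately before the lemma; those come from the known $\inf$ and $\sup$ of $\mathbb S_{(-s,u)}$ by the digit recodings $\Delta^{-s}_{\alpha_1\alpha_2\ldots}\mapsto\Delta^{s}_{[s-1-\alpha_1]\alpha_2[s-1-\alpha_3]\ldots}$ (for the translation by $\tfrac{s}{s+1}$) and $\Delta^{-s}_{\alpha_1\alpha_2\ldots}\mapsto\Delta^{s}_{\alpha_1[s-1-\alpha_2]\alpha_3\ldots}$ (for $\tfrac1{s+1}-\,\cdot\,$).

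It then remains to evaluate $\ddot F$ at the two points $\inf\underline{\mathbb S_{(s,u)}}$ and $\sup\underline{\mathbb S_{(s,u)}}$, and $\tilde F$ at the two points $\inf\overline{\mathbb S_{(s,u)}}$ and $\sup\overline{\mathbb S_{(s,u)}}$, directly from the series defining these functions, and to check that the resulting $P$-representations are the ones displayed. This has to be carried out in the three regimes $u=0$, $u=1$, $u\in\{2,3,\dots,s-1\}$ --- which is the source of the branching in the $\inf$/$\sup$ formulas --- and, in each, for the infimum and for the supremum. Once these eight evaluations are in hand, the identities $\inf\underline{\mathbb S_{(P,u)}}=\ddot F\bigl(\inf\underline{\mathbb S_{(s,u)}}\bigr)$, $\sup\underline{\mathbb S_{(P,u)}}=\ddot F\bigl(\sup\underline{\mathbb S_{(s,u)}}\bigr)$, $\inf\overline{\mathbb S_{(P,u)}}=\tilde F\bigl(\inf\overline{\mathbb S_{(s,u)}}\bigr)$, $\sup\overline{\mathbb S_{(P,u)}}=\tilde F\bigl(\sup\overline{\mathbb S_{(s,u)}}\bigr)$ give the claimed formulas, while the two $\equiv$ identifications in the statement are immediate from the set equality $\overline{\mathbb S_{(P,u)}}=\mathbb S_{(-P,u)}$ noted earlier together with the analogous relation for $\mathbb S_{(P,u)}$.

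I expect the only genuine obstacle to be the digit bookkeeping in this last step: the series for $\ddot F$ and $\tilde F$ involve the substitutions $\ddot p_{\alpha_k},\ddot\beta_{\alpha_k}$ (respectively $\tilde p_{\alpha_k},\tilde\beta_{\alpha_k}$), which treat a digit sitting at an even position differently from one at an odd position, and each endpoint is a nega-$P$-rational number, hence has two $P$-representations; one therefore has to track positional parity carefully and consistently select the representation written in the statement, rather than argue purely lexicographically on the digit strings of $\mathbb S_{(-P,u)}$. Everything beyond that is a finite verification, with the structural input --- monotonicity together with compactness --- doing all the conceptual work.
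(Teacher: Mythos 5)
Your proposal is correct and follows essentially the same route as the paper, whose entire proof is the one-line observation that $\tilde F$ and $\ddot F$ are continuous and strictly increasing (since all $p_i>0$), so the previously recorded extrema of $\underline{\mathbb S_{(s,u)}}$ and $\overline{\mathbb S_{(s,u)}}$ are simply pushed through these maps. You supply the supporting details (compactness, attainment of extrema, the final digit-bookkeeping evaluations) that the paper leaves implicit, but the conceptual content is identical.
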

\begin{proof}
Since $\tilde F$ and $\ddot F$ are continuous and strictly increasing when the inequality $p_i>0$ holds for   all $i=\overline{0,s-1}$
(\cite{Symon2017, Symon2019}), our statement is true.
\end{proof}

\section{Proof  of Theorem~\ref{th: the first main theorem}}

One can begin with the following statement. 
\begin{lemma}
An arbitrary set $\mathbb  S_{(-P,u)}$  is an uncountable set.
\end{lemma}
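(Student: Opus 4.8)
The plan is to realize $\mathbb S_{(-P,u)}$ as a bijective image of the set $\mathbb S_{(-s,u)}$, whose uncountability is already known, and thereby to transfer that cardinality. Recall from the previous section the identification $\mathbb S_{(-P,u)}=\{z:z=\tilde F\circ f_l(x),\ x\in\mathbb S_{(-s,u)}\}$, where $f_l(y)=\frac{1}{s+1}-y$ and $\tilde F$ is the Salem-type distribution function of $\varsigma$. Since $\mathbb S_{(-s,u)}\subset\left[-\frac{s}{s+1},\frac{1}{s+1}\right]$, the affine map $f_l$ carries it into $[0,1]$; and, as recorded in the Lemma of the preceding section, the restriction of $\tilde F$ to $[0,1]$ is continuous and strictly increasing precisely because $p_i>0$ for every $i=\overline{0,s-1}$. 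Hence $\tilde F\circ f_l$ is a continuous and strictly decreasing (in particular injective) map on $\mathbb S_{(-s,u)}$, so it is a bijection of $\mathbb S_{(-s,u)}$ onto $\mathbb S_{(-P,u)}$.

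First I would write out the two preliminary checks explicitly: that $f_l(\mathbb S_{(-s,u)})\subseteq[0,1]$ follows from the bounds on the nega-$s$-adic representation stated in the introduction, and that strict monotonicity of $\tilde F$ on $[0,1]$ is exactly the standing hypothesis on $P$. Once injectivity of $\tilde F\circ f_l$ is in place, the sets $\mathbb S_{(-P,u)}$ and $\mathbb S_{(-s,u)}$ have the same cardinality, and Theorem~\ref{th: theorem1} (which states that $\mathbb S_{(-s,u)}$ is uncountable) completes the argument.

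A more self-contained alternative bypasses this reduction and argues directly in the nega-$P$-representation. Define $\Phi:A_0^{\mathbb N}\to\mathbb S_{(-P,u)}$ by sending a sequence $(\alpha_n)$ to the number $\Delta^{-P}_{\underbrace{u\ldots u}_{\alpha_1-1}\alpha_1\underbrace{u\ldots u}_{\alpha_2-1}\alpha_2\ldots}$; this map is well defined (its value is a single point, by the cylinder property $\bigcap_{m}\Delta^{-P}_{c_1c_2\ldots c_m}=\{x\}$) and surjective by the definition of $\mathbb S_{(-P,u)}$. Because $|A_0|\ge 2$ for every $s>3$, the domain $A_0^{\mathbb N}$ is uncountable, so it remains only to see that $\Phi$ is injective: two distinct admissible digit strings could yield one and the same number only if that number is nega-$P$-rational, and the two nega-$P$-representations of a nega-$P$-rational number differ by a tail of the form $0[s-1]0[s-1]\ldots$, which cannot occur among strings produced by $\Phi$ (here one uses $\alpha_n\ne 0$, together with a short additional check concerning the filler digit $u$).

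The only point that needs genuine care is this last injectivity and ambiguity bookkeeping in the direct approach, namely confirming that no two distinct admissible strings collapse to a common nega-$P$-rational value. In the reduction approach there is essentially no obstacle at all: one only needs the elementary facts that $\tilde F\circ f_l$ maps into $[0,1]$ and is strictly monotone there, after which the conclusion follows immediately from the uncountability of $\mathbb S_{(-s,u)}$.
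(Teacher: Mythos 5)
Your primary argument — realizing $\mathbb S_{(-P,u)}$ as the image of the uncountable set $\mathbb S_{(-s,u)}$ under the injective map $\tilde F\circ f_l$ (injective because $\tilde F$ is strictly monotone when all $p_i>0$) — is exactly the paper's proof, and it is correct. The direct injection from $A_0^{\mathbb N}$ you sketch as an alternative is a reasonable self-contained variant but is not needed.
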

\begin{proof} It is known (\cite{S. Serbenyuk 2013}) that the set $\mathbb  S_{(-s,u)}$  is  uncountable. Really, it follows from using the mapping $h$: 
$$
 x= \Delta^{-s}_{{\underbrace{u...u}_{\alpha_1-1}} \alpha_1{\underbrace{u...u}_{\alpha_2 -1}}\alpha_2 ...{\underbrace{u...u}_{ \alpha_n -1}}\alpha_n...} \stackrel{h}{\longrightarrow}
 \Delta^{-s}_{\alpha_1\alpha_2 ...\alpha_n...}=h(x)=y.
$$
It is easy to see that the set $\{y: y= \Delta^{-s}_{\alpha_1\alpha_2 ...\alpha_n...}, \alpha_n\in A\setminus\{0,u\}\}$ is   an uncountable set.

In our case, we have 
$$
\mathbb  S_{(-P,u)}\equiv  \overline{\mathbb  S_{(P,u)}} \ni \tilde y=\tilde F\circ f_l(x),~~~x\in \mathbb  S_{(-s,u)}.
$$
Since the functions $\tilde F, f_l$ are continuous and monotonic functions determined on $[0,1]$, we obtain that $\mathbb  S_{(-P,u)}$  is  uncountable. 
\end{proof}

Let $P=\{p_0,p_1, \dots , p_{s-1}\}$ be a fixed set of positive numbers such that $p_0+p_1+\dots + p_{s-1}=1$.

Let us consider the  class  $\Phi$  containing  classes $\Phi_{-P_s}$ of sets  $\mathbb  S_{(-P,u)}$ represented  in the form 
\begin{equation}
\mathbb S_{(-P,u)}\equiv\left\{x: x= \Delta^{-P}_{{\underbrace{u...u}_{\alpha_1-1}} \alpha_1{\underbrace{u...u}_{\alpha_2 -1}}\alpha_2 ...{\underbrace{u...u}_{ \alpha_n -1}}\alpha_n...},   \alpha_n \ne u, \alpha_n \ne 0 \right\}, 
\end{equation}
where $u=\overline{0,s-1}$, the parameters $u$ and $s$ are fixed for the set $\mathbb  S_{(-P,u)}$. That is, for a fixed positive integer $s>3$, the class $\Phi_{-P_s}$ contains the sets  $\mathbb  S_{(-P,0)}, \mathbb  S_{(-P,1)},\dots,\mathbb  S_{(-P,s-1)}$. 

To investigate topological and metric properties of  $\mathbb  S_{(-P,u)}$, we  study properties of cylinders.  

By $(a_1a_2\ldots a_k)$ denote the period $a_1a_2\ldots a_k$ in the representation of a periodic number.

Let $c_1, c_2,\dots , c_n$ be a fixed ordered tuple of integers such that $c_i\in \overline{A}=A\setminus\{0,u\}$ for $i=\overline{1,n}$.

\begin{definition} 
{\itshape A cylinder of rank $n$ with  base $c_1c_2\ldots c_n$} is a set $\Delta^{(-P,u)} _{c_1c_2\ldots c_n}$ of the form: 
$$
\Delta^{(-P,u)} _{c_1c_2\ldots c_n}=\left\{x: x=\Delta^{-P}_{{\underbrace{u...u}_{c_1-1}} c_1{\underbrace{u...u}_{c_2 -1}}c_2 ...{\underbrace{u...u}_{ c_n -1}}c_n{\underbrace{u...u}_{\alpha_{n+1}-1}}\alpha_{n+1}{\underbrace{u...u}_{\alpha_{n+2}-1}}\alpha_{n+2}...}, \alpha_j=c_j, j=\overline{1,n}\right\}.
$$

By analogy, we have
$$
\Delta^{(P,u)} _{c_1c_2\ldots c_n}=\left\{x: x=\Delta^{P}_{{\underbrace{u...u}_{c_1-1}} c_1{\underbrace{u...u}_{c_2 -1}}c_2 ...{\underbrace{u...u}_{ c_n -1}}c_n{\underbrace{u...u}_{\alpha_{n+1}-1}}\alpha_{n+1}{\underbrace{u...u}_{\alpha_{n+2}-1}}\alpha_{n+2}...}, \alpha_{n+k},c_j \in \overline{A}, j=\overline{1,n}, k\in\mathbb N\right\}.
$$
 \end{definition}

\begin{remark}
It is easy to see that
$$
\Delta^{(-P,u)} _{c_1c_2\ldots c_n}=\Delta^{-P}_{{\underbrace{u...u}_{c_1-1}} c_1{\underbrace{u...u}_{c_2 -1}}c_2 ...{\underbrace{u...u}_{ c_n -1}}c_n}\cap \mathbb S_{(-P,u)}
$$
and
$$
\Delta^{(-P,u)} _{c_1c_2\ldots c_n}=\Delta^{(P,u)} _{\tilde c_1\tilde c_2\ldots \tilde c_n}=\Delta^{-P}_{{\underbrace{\tilde u...\tilde u}_{c_1-1}} \tilde c_1{\underbrace{\tilde u...\tilde u}_{c_2 -1}}\tilde c_2 ...{\underbrace{\tilde u...\tilde u}_{ c_n -1}}\tilde c_n}.
$$
\end{remark}

By definition, put
$$
\tilde p_{u,i}=\begin{cases}
p_u &\text{whenever $i$ is odd}\\
p_{s-1-u} &\text{whenever $i$ is even}
\end{cases},
$$
$$
\tilde \beta_{u,i}=\begin{cases}
\beta_u &\text{whenever $i$ is odd}\\
\beta_{s-1-u} &\text{whenever $i$ is even}
\end{cases},
$$
and
$$
C_{n-1}=\{c_1, c_1+c_2,\dots , c_1+c_2+\dots +c_{n-1}\}.
$$

\begin{lemma} Cylinders $ \Delta^{(-P,u)} _{c_1...c_n} $ have the following properties:
\label{lm: Lemma on cylinders}
\begin{enumerate}
\item
$$
\inf  \Delta^{(-P,u)} _{c_1...c_n}=\begin{cases}
\tau_n+\left(\prod^{n} _{j=1}{\tilde p_{c_j, c_1+...+c_j}}\right)\left(\prod_{\substack{i=\overline{1,c_1+...+c_n-1}\\ i\notin C_{n-1}}}{\tilde p_{u,i}}\right)\inf{\overline{\mathbb S_{(P,u)}}} &\text{if $c_1+\dots +c_n$ is even} \\
\tau_n+\left(\prod^{n} _{j=1}{\tilde p_{c_j, c_1+...+c_j}}\right)\left(\prod_{\substack{i=\overline{1,c_1+...+c_n-1}\\ i\notin C_{n-1}}}{\tilde p_{u,i}}\right)\inf{\underline{\mathbb S_{(P,u)}}} &\text{if $c_1+\dots +c_n$ is odd} \\
\end{cases},
$$
$$
\sup  \Delta^{(-P,u)} _{c_1...c_n}=\begin{cases}
\tau_n+\left(\prod^{n} _{j=1}{\tilde p_{c_j, c_1+...+c_j}}\right)\left(\prod_{\substack{i=\overline{1,c_1+...+c_n-1}\\ i\notin C_{n-1}}}{\tilde p_{u,i}}\right)\sup{\overline{\mathbb S_{(P,u)}}} &\text{if $c_1+\dots +c_n$ is even} \\
\tau_n+\left(\prod^{n} _{j=1}{\tilde p_{c_j, c_1+...+c_j}}\right)\left(\prod_{\substack{i=\overline{1,c_1+...+c_n-1}\\ i\notin C_{n-1}}}{\tilde p_{u,i}}\right)\sup{\underline{\mathbb S_{(P,u)}}} &\text{if $c_1+\dots +c_n$ is odd} \\
\end{cases},
$$
where 
$$
\tau_n=\Delta^P _{{\underbrace{\tilde u...\tilde u}_{c_1-1}} \tilde c_1{\underbrace{\tilde u...\tilde u}_{c_2 -1}}\tilde c_2 ...{\underbrace{\tilde u...\tilde u}_{ c_n -1}}\tilde c_n(0)}.
$$

\item If $d(\cdot) $ is the diameter of a set, then
$$
d\left( \Delta^{(-P,u)} _{c_1...c_n}\right)=\begin{cases}
\left(\prod^{n} _{j=1}{\tilde p_{c_j, c_1+...+c_j}}\right)\left(\prod_{\substack{i=\overline{1,c_1+...+c_n-1}\\ i\notin C_{n-1}}}{\tilde p_{u,i}}\right)d\left({\overline{\mathbb S_{(P,u)}}}\right) &\text{if $c_1+\dots +c_n$ is even} \\
\left(\prod^{n} _{j=1}{\tilde p_{c_j, c_1+...+c_j}}\right)\left(\prod_{\substack{i=\overline{1,c_1+...+c_n-1}\\ i\notin C_{n-1}}}{\tilde p_{u,i}}\right)d\left({\underline{\mathbb S_{(P,u)}}}\right) &\text{if $c_1+\dots +c_n$ is odd} \\
\end{cases}
$$
\item $\frac{d(\Delta^{(-P,u)} _{c_1...c_nc_{n+1}})}{d(\Delta^{(-P,u)} _{c_1...c_n})}=$
$$
=\begin{cases}
p_{s-1-c_{n+1}}\left(\prod^{c_1+...+c_{n+1}-1} _{{i={c_1+c_2+...+c_n+1}}}{\tilde p_{u,i}}\right)&\text{if $c_1+\dots +c_n$, $c_{n+1}$ are  even} \\
p_{c_{n+1}}\left(\prod^{c_1+...+c_{n+1}-1} _{{i={c_1+c_2+...+c_n+1}}}{\tilde p_{u,i}}\right)&\text{if $c_1+\dots +c_n$ is odd,  $c_{n+1}$ is  even} \\
p_{s-1-c_{n+1}}\left(\prod^{c_1+...+c_{n+1}-1} _{{i={c_1+c_2+...+c_n+1}}}{\tilde p_{u,i}}\right)\left({\overline{\mathbb S_{(P,u)}}}/{\underline{\mathbb S_{(P,u)}}}\right) &\text{if $c_1+\dots +c_n$, $c_{n+1}$ are odd } \\
p_{c_{n+1}}\left(\prod^{c_1+...+c_{n+1}-1} _{{i={c_1+c_2+...+c_n+1}}}{\tilde p_{u,i}}\right)\left({\underline{\mathbb S_{(P,u)}}}/{\overline{\mathbb S_{(P,u)}}}\right) &\text{if $c_1+\dots +c_n$ is even, $c_{n+1}$ is odd } \\
\end{cases}.
$$
\item 
$$
  \Delta^{(-P,u)} _{c_1c_2...c_n} =\bigcup_{c\in\overline{A}} { \Delta^{(-P,u)} _{c_1c_2...c_nc}}~~~\forall c_n \in \overline{A},~~~n \in \mathbb N.
$$
\item The following relationships are satisfied: 
\begin{enumerate}
\item if $ u\in \{0,1\}$, then 
$$
\begin{cases}
\inf \Delta^{(-P,u)} _{c_1...c_n[c+1]}> \sup \Delta^{(-P,u)} _{c_1...c_nc}&\text{whenever $c_1+\dots +c_n+c$ is even}\\
$$\\
\inf \Delta^{(-P,u)} _{c_1...c_nc}> \sup \Delta^{(-P,u)} _{c_1...c_n[c+1]}&\text{whenever $c_1+\dots +c_n+c$ is odd}
\end{cases}~~~(c\ne s-1);
$$
\item if  $ u \in \{2,3,\dots ,s-3\}$, then for an odd $c_1+\dots +c_n+c$ 
$$
\begin{cases}
\sup \Delta^{(-P,u)} _{c_1...c_nc}< \inf \Delta^{(P,u)} _{c_1...c_n[c+1]}&\text{for all $c+1\le u$}\\
$$\\
\inf \Delta^{(-P,u)} _{c_1...c_nc}> \sup \Delta^{(-P,u)} _{c_1...c_n[c+1]},&\text{for all $u<c$;}
\end{cases}
$$
if  $ u \in \{2,3,\dots ,s-3\}$, then for an even $c_1+\dots +c_n+c$ 
$$
\begin{cases}
\inf \Delta^{(-P,u)} _{c_1...c_n[c+1]}> \sup \Delta^{(-P,u)} _{c_1...c_nc}&\text{for all $u<c$}\\
$$\\
\inf \Delta^{(-P,u)} _{c_1...c_nc}> \sup \Delta^{(-P,u)} _{c_1...c_n[c+1]}&\text{for all $c+1\le u$}
\end{cases};
$$

\item if $ u  \in \{s-2,s-1\}$, then
$$
\begin{cases}
\inf \Delta^{(-P,u)} _{c_1...c_n[c+1]}> \sup \Delta^{(-P,u)} _{c_1...c_nc}&\text{whenever $c_1+\dots +c_n+c$ is odd}\\
\inf \Delta^{(-P,u)} _{c_1...c_nc}> \sup \Delta^{(-P,u)} _{c_1...c_n[c+1]}&\text{whenever $c_1+\dots +c_n+c$ is even}
\end{cases}.
$$
\end{enumerate}
\end{enumerate}
\end{lemma}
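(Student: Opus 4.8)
The plan is to reduce everything to the structure of the $P$-cylinders $\Delta^{P} _{d_1\ldots d_N}$ established in the Lemma above, together with the self-affine behaviour of $\tilde F$ and $\ddot F$. Recall that $\mathbb S_{(-P,u)}\equiv\overline{\mathbb S_{(P,u)}}=\tilde F\left(\overline{\mathbb S_{(s,u)}}\right)$, that by the Remark above $\Delta^{(-P,u)} _{c_1\ldots c_n}=\Delta^{-P}_{{\underbrace{u\ldots u}_{c_1-1}}c_1\ldots{\underbrace{u\ldots u}_{c_n-1}}c_n}\cap\mathbb S_{(-P,u)}$, and that, written in the $P$-representation, this raw cylinder is a single $P$-cylinder of rank $N:=c_1+\dots+c_n$, namely $\Delta^{P} _{d_1\ldots d_N}$ whose digit at each block-end position $c_1+\dots+c_j$ is $\tilde c_j$ and whose digit at every other position $i\le N-1$ is $\tilde u$ ($(s-1)$-complements being taken at even positions). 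The point I would make first is the renormalization fact: the restriction of $\tilde F$ to an $s$-adic cylinder of rank $N$, after affine rescaling of both domain and range onto $[0,1]$, coincides with $\tilde F$ when $N$ is even and with $\ddot F$ when $N$ is odd (the weights of $\tilde F$ are $2$-periodic in the position), and likewise the renormalized preimage $\overline{\mathbb S_{(s,u)}}$-cylinder is $\overline{\mathbb S_{(s,u)}}$ when $N$ is even and $\underline{\mathbb S_{(s,u)}}$ when $N$ is odd. Hence inside $\Delta^{P} _{d_1\ldots d_N}$ the set $\mathbb S_{(-P,u)}$ equals the affine image $\tau_n+|\Delta^{P} _{d_1\ldots d_N}|\cdot Z$, where $\tau_n=\inf\Delta^{P} _{d_1\ldots d_N}=\Delta^{P} _{d_1\ldots d_N(0)}$ by property~(2) of the previous Lemma (this is exactly the $\tau_n$ of the statement) and $Z=\overline{\mathbb S_{(P,u)}}$ if $N$ is even, $Z=\underline{\mathbb S_{(P,u)}}$ if $N$ is odd.

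Given this, property~(1) is immediate: $\inf$ and $\sup$ of $\Delta^{(-P,u)} _{c_1\ldots c_n}$ are $\tau_n+|\Delta^{P} _{d_1\ldots d_N}|\cdot\inf Z$ and $\tau_n+|\Delta^{P} _{d_1\ldots d_N}|\cdot\sup Z$, and by property~(3) of the previous Lemma $|\Delta^{P} _{d_1\ldots d_N}|=p_{d_1}\cdots p_{d_N}$, which is precisely the double product $\left(\prod_{j=1}^{n}\tilde p_{c_j,\,c_1+\dots+c_j}\right)\left(\prod_{i\le c_1+\dots+c_n-1,\ i\notin C_{n-1}}\tilde p_{u,i}\right)$, the first factor collecting the $p$'s at the $n$ block-end positions and the second those at the $N-n$ positions carrying $u$. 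Property~(2) follows by subtraction, $d(\Delta^{(-P,u)} _{c_1\ldots c_n})=|\Delta^{P} _{d_1\ldots d_N}|\cdot(\sup Z-\inf Z)=|\Delta^{P} _{d_1\ldots d_N}|\cdot d(Z)$. For property~(3) I would divide the rank-$(n+1)$ diameter by the rank-$n$ one: appending $c_{n+1}$ to the base adds $c_{n+1}-1$ new $u$-positions, contributing $\prod_{i=c_1+\dots+c_n+1}^{c_1+\dots+c_{n+1}-1}\tilde p_{u,i}$, and one new block-end digit equal to $p_{c_{n+1}}$ or $p_{s-1-c_{n+1}}$ according to the parity of $c_1+\dots+c_{n+1}$; moreover $d(Z)$ passes from $d(\overline{\mathbb S_{(P,u)}})$ to $d(\underline{\mathbb S_{(P,u)}})$ or back exactly when the parity of $N$ flips, i.e.\ exactly when $c_{n+1}$ is odd. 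Sorting the four parity combinations of $(c_1+\dots+c_n,\,c_{n+1})$ reproduces the four lines of the stated formula; this is routine bookkeeping.

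Property~(4) is immediate from the definitions: any $x\in\Delta^{(-P,u)} _{c_1\ldots c_n}$ has its $(n+1)$-st free digit $\alpha_{n+1}$ in $\overline A$, so $x\in\Delta^{(-P,u)} _{c_1\ldots c_n\alpha_{n+1}}$, while conversely $\Delta^{(-P,u)} _{c_1\ldots c_nc}\subset\Delta^{(-P,u)} _{c_1\ldots c_n}$ for each $c\in\overline A$; uniqueness of the block-structured nega-$P$-representation of a number excludes any ambiguity from nega-$P$-rational points.

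Property~(5) is the main obstacle, and I would treat it as a ``disposition of siblings'' argument. Both $\Delta^{(-P,u)} _{c_1\ldots c_nc}$ and $\Delta^{(-P,u)} _{c_1\ldots c_n[c+1]}$ lie inside the common $P$-cylinder of rank $N+c-1$ obtained by appending ${\underbrace{\tilde u\ldots\tilde u}_{c-1}}$ to $d_1\ldots d_N$, but in two different rank-$(N+c)$ sub-cylinders of it: the one whose digit at position $N+c$ is $\tilde c$ contains $\Delta^{(-P,u)} _{c_1\ldots c_nc}$, whereas the one whose digit at position $N+c$ is $\tilde u$ contains $\Delta^{(-P,u)} _{c_1\ldots c_n[c+1]}$ (since the base digit $c+1$ means ``$c$ copies of $u$ followed by $c+1$'', so position $N+c$ carries $u$, not $c$). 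By property~(8) of the previous Lemma the rank-$(N+c)$ sub-cylinders are linearly ordered by their $P$-digit at position $N+c$; that digit is the nega-digit itself when $N+c$ is odd and its $(s-1)$-complement when $N+c$ is even, so the order among sub-cylinders agrees with the order of the nega-digits $c$ and $u$ when $c_1+\dots+c_n+c$ is odd and reverses it when it is even. Since $c\in\overline A=A\setminus\{0,u\}$ one has $c<u$ or $c>u$; combining the sign of $c-u$ with the parity of $c_1+\dots+c_n+c$ determines in each case which cylinder lies to the left, and this recovers exactly (a)--(c): for $u\in\{0,1\}$ always $c>u$; for $u\in\{s-2,s-1\}$ with $c\ne s-1$ always $c<u$; for $u\in\{2,\dots,s-3\}$ both occur, whence the split on $c+1\le u$ versus $u<c$. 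Finally the inequalities are strict because, by property~(1) just proved and the preceding Lemma, $\inf\overline{\mathbb S_{(P,u)}},\inf\underline{\mathbb S_{(P,u)}}>0$ and $\sup\overline{\mathbb S_{(P,u)}},\sup\underline{\mathbb S_{(P,u)}}<1$, so $\Delta^{(-P,u)} _{c_1\ldots c_nc}$ lies strictly inside its rank-$(N+c)$ $P$-cylinder and its supremum (resp.\ infimum) is strictly smaller (resp.\ larger) than the endpoint shared with the neighbouring $P$-cylinder. The only genuine work is to carry this parity-and-position bookkeeping cleanly through the several subcases; there is no conceptual difficulty once the renormalization picture and property~(8) are in place.
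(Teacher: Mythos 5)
Your proposal is correct, and for properties (1)--(4) it essentially coincides with the paper's (very terse) treatment: the paper simply asserts that (1) follows from the nega-$P$-expansion \eqref{nega} and that (2), (3) are corollaries, which is exactly the renormalization computation $\Delta^{-P}_{\ldots}=\tau_n+|\Delta^{P}_{d_1\ldots d_N}|\cdot\Delta^{P}_{e_1e_2\ldots}$ with tail set $\overline{\mathbb S_{(P,u)}}$ or $\underline{\mathbb S_{(P,u)}}$ according to the parity of $N=c_1+\dots+c_n$ that you spell out. For property (5), however, you take a genuinely different route. The paper's proof consists almost entirely of explicit evaluation of the differences $\inf\Delta^{(-P,u)}_{c_1\ldots c_n[c+1]}-\sup\Delta^{(-P,u)}_{c_1\ldots c_nc}$ (and the three companion differences) via the endpoint formulas of property (1), followed by sign verification using identities such as $\beta_k=1-p_{s-1}-\dots-p_k$ and the hypotheses $c>u$ or $c+1\le u$; this occupies several pages of algebra but produces explicit expressions for the gaps between neighbouring cylinders. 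You instead observe that the two competing fractal cylinders sit inside distinct $P$-cylinders of rank $N+c$ sharing a common prefix of length $N+c-1$ and differing only in the digit $\tilde c$ versus $\tilde u$ at position $N+c$, and then invoke the ordering of same-rank $P$-cylinders by their last digit (property (8) of the preceding lemma) together with the order reversal caused by the $(s-1)$-complement at even positions; the trichotomy on $u$ and the parity split then fall out of comparing $c$ with $u$. This is cleaner and less error-prone than the paper's computation, and your strictness argument (via $\inf Z>0$, $\sup Z<1$ for $Z\in\{\overline{\mathbb S_{(P,u)}},\underline{\mathbb S_{(P,u)}}\}$, which is immediate from the explicit periodic representations of these endpoints) closes the only gap left by the soft ordering argument; what it does not give, and the paper's computation does, is a quantitative lower bound on the gaps. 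Both arguments are valid.
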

\begin{proof} \emph{The first property} follows from equality~\eqref{nega} and the definition of the set $\mathbb S_{(-P,u)}$. 
\emph{The second property} follows from the first property, and \emph{the third property} is a corollary of the first and second properties. \emph{Property 4} follows from the definition of the set. 

Let us prove \emph{Property 5}.  By definition, put
$$
P_n=\prod^{n} _{j=1}{\tilde p_{c_j, c_1+...+c_j}},~~~~~P^{(u)} _{c_1+...+c_n+c-1}=\prod_{\substack{i=\overline{1,c_1+...+c_n+c-1}\\ i\notin C_{n}}}{\tilde p_{u,i}}.
$$

Let $c_1+c_2+\dots +c_n+c$ be an even number and $u\in\{0,1\}$. Then 
$$
\inf \Delta^{(-P,u)} _{c_1...c_n[c+1]}- \sup \Delta^{(-P,u)} _{c_1...c_nc}=P_n\cdot P^{(u)} _{c_1+...+c_n+c-1}\cdot(\tilde \beta_{u, c_1+...+c_n+c}+\tilde \beta_{c+1, c_1`+...+c_n+c+1}\tilde p_{u,c_1+...+c_n+c}
$$
$$
+\tilde p_{c+1, c_1+...+c_n+c+1}\tilde p_{u,c_1+...+c_n+c}\inf{\underline{\mathbb S_{(P,u)}}}-\tilde \beta_{c, c_1+...+c_n+c}-\tilde p_{c, c_1+...+c_n+c}\sup{\overline{\mathbb S_{(P,u)}}})=P_n\cdot P^{(u)} _{c_1+...+c_n+c-1}\times
$$
$$
\times\left(\beta_{s-1-u}+\beta_{c+1}p_{s-1-u}+p_{c+1}p_{s-1-u}\inf{\underline{\mathbb S_{(P,u)}}}-\beta_{s-1-c}-p_{s-1-c}\sup{\overline{\mathbb S_{(P,u)}}}\right)=P_n\cdot P^{(u)} _{c_1+...+c_n+c-1}\times
$$
$$
\times \left(\beta_{c+1}p_{s-1-u}+p_{c+1}p_{s-1-u}\inf{\underline{\mathbb S_{(P,u)}}}-(1-p_{s-1}-\dots -p_{s-c}-p_{s-1-c})+\beta_{s-1-u}-p_{s-1-c}\sup{\overline{\mathbb S_{(P,u)}}}\right)
$$
$$
=P_n\cdot P^{(u)} _{c_1+...+c_n+c-1}\left(\beta_{c+1}p_{s-1-u}+p_{c+1}p_{s-1-u}\inf{\underline{\mathbb S_{(P,u)}}}+p_{s-1-c}(1-\sup{\overline{\mathbb S_{(P,u)}}})+\beta_{s-1-u}-\beta_{s-c}\right)>0
$$
since $u\in\{0,1\}, c>u,$ and $\beta_k=1-p_{s-1}-p_{s-2}-\dots - p_k$.

Let $c_1+c_2+\dots +c_n+c$ be an odd number and $u\in\{0,1\}$. Then 
$$
\inf \Delta^{(-P,u)} _{c_1...c_nc}- \sup \Delta^{(-P,u)} _{c_1...c_n[c+1]}=P_n\cdot P^{(u)} _{c_1+...+c_n+c-1}\cdot(\tilde \beta_{c, c_1+...+c_n+c}+\tilde p_{c, c_1+...+c_n+c}\inf{\underline{\mathbb S_{(P,u)}}}
$$
$$
-\tilde\beta_{u, c_1+...+c_n+c}-\tilde \beta_{c+1, c_1+...+c_n+c+1}\tilde p_{u,c_1+...+c_n+c}-\tilde p_{c+1, c_1+...+c_n+c+1}\tilde p_{u,c_1+...+c_n+c}\sup{\overline{\mathbb S_{(P,u)}}})
$$
$$
=P_n\cdot P^{(u)} _{c_1+...+c_n+c-1}\left(\beta_c+p_c\inf{\underline{\mathbb S_{(P,u)}}}-\beta_u-\beta_{s-c-2}p_u-p_{s-c-2}p_u\sup{\overline{\mathbb S_{(P,u)}}}\right)
$$
$$
=P_n\cdot P^{(u)} _{c_1+...+c_n+c-1}\left(p_c\inf{\underline{\mathbb S_{(P,u)}}}+\beta_c-\beta_u-p_u(1-p_{s-1}-\dots - p_{s-c-1}-p_{s-c-2})-p_{s-c-2}p_u\sup{\overline{\mathbb S_{(P,u)}}}\right)
$$
$$
=P_n\cdot P^{(u)} _{c_1+...+c_n+c-1}\left(p_c\inf{\underline{\mathbb S_{(P,u)}}}+p_{s-c-2}p_u(1-\sup{\overline{\mathbb S_{(P,u)}}})+\beta_c-\beta_{u+1}+p_u(p_{s-c-1}+\dots +p_{s-1})\right)>0
$$
since $\beta_u+p_u=\beta_{u+1}$ and $c>u$, i.e., $c\ge u+1$.

Let us prove the second system of inequalities. Let $c_1+c_2+\dots +c_n+c$ be an odd number and $u\in \{2,3,\dots , s-3\}$. Then for all $c+1\le u$ let us consider the difference
$$
\sup \Delta^{(-P,u)} _{c_1...c_nc}-\inf \Delta^{(-P,u)} _{c_1...c_n[c+1]}=P_n\cdot P^{(u)} _{c_1+...+c_n+c-1}\cdot(\tilde \beta_{c, c_1+...+c_n+c}+\tilde p_{c, c_1+...+c_n+c}\sup{\underline{\mathbb S_{(P,u)}}}
$$
$$
-\tilde \beta_{u, c_1+...+c_n+c}-\tilde \beta_{c+1, c_1+...+c_n+c+1}\tilde p_{u,c_1+...+c_n+c}-\tilde p_{c+1, c_1+...+c_n+c+1}\tilde p_{u,c_1+...+c_n+c}\inf{\overline{\mathbb S_{(P,u)}}})
$$
$$
=P_n\cdot P^{(u)} _{c_1+...+c_n+c-1}\left(\beta_c+p_c\sup{\underline{\mathbb S_{(P,u)}}}-\beta_u-\beta_{s-c-2}p_u-p_{s-c-2}p_u\inf{\overline{\mathbb S_{(P,u)}}}\right)
$$
$$
=P_n\cdot P^{(u)} _{c_1+...+c_n+c-1}\left(\beta_{c+1}-\beta_u-p_c(1-\sup{\underline{\mathbb S_{(P,u)}}})-\beta_{s-c-2}p_u-p_{s-c-2}p_u\inf{\overline{\mathbb S_{(P,u)}}}\right)<0
$$
since $c+1\le u$ and $\beta_c=1-p_{s-1}-p_{s-2}-\dots - p_{c+1}-p_c=\beta_{c+1}-p_c$.

If $c_1+c_2+\dots +c_n+c$ is  an odd number and $u\in \{2,3,\dots , s-3\}$, and $u+1\le c$, then
$$
\inf \Delta^{(-P,u)} _{c_1...c_nc}- \sup \Delta^{(-P,u)} _{c_1...c_n[c+1]}=P_n\cdot P^{(u)} _{c_1+...+c_n+c-1}\cdot(\tilde \beta_{c, c_1+...+c_n+c}+\tilde p_{c, c_1+...+c_n+c}\inf{\underline{\mathbb S_{(P,u)}}}
$$
$$
-\tilde\beta_{u, c_1+...+c_n+c}-\tilde \beta_{c+1, c_1+...+c_n+c+1}\tilde p_{u,c_1+...+c_n+c}-\tilde p_{c+1, c_1+...+c_n+c+1}\tilde p_{u,c_1+...+c_n+c}\sup{\overline{\mathbb S_{(P,u)}}})=
$$
$$
=P_n\cdot P^{(u)} _{c_1+...+c_n+c-1}\left(\beta_c+p_c\inf{\underline{\mathbb S_{(P,u)}}}-\beta_u-\beta_{s-c-2}p_u-p_{s-c-2}p_u\sup{\overline{\mathbb S_{(P,u)}}}\right)
$$
$$
=P_n\cdot P^{(u)} _{c_1+...+c_n+c-1}\left(\beta_c-\beta_{u+1}+p_c\inf{\underline{\mathbb S_{(P,u)}}}+p_{s-c-2}p_u(1-\sup{\overline{\mathbb S_{(P,u)}}})+p_u(p_{s-c-1}+\dots +p_{s-1})\right)>0
$$
since $c\ge u+1$ and $\beta_{s-c-2}=1-p_{s-1}-\dots -p_{s-c-1}-p_{s-c-2}$.

Let us prove the third system of inequalities. Suppose $c_1+c_2+\dots +c_n+c$ is   even  and $u\in \{2,3,\dots , s-3\}$. Then
$$
\inf \Delta^{(-P,u)} _{c_1...c_nc}- \sup \Delta^{(-P,u)} _{c_1...c_n[c+1]}=P_n\cdot P^{(u)} _{c_1+...+c_n+c-1}(\beta_{s-1-c}+p_{s-1-c}\inf{\overline{\mathbb S_{(P,u)}}}-\beta_{s-1-u}-\beta_{c+1}p_{s-1-u}
$$
$$
-p_{s-1-u}p_{c+1}\sup{\underline{\mathbb S_{(P,u)}}})=P_n\cdot P^{(u)} _{c_1+...+c_n+c-1}(p_{s-1-c}\inf{\overline{\mathbb S_{(P,u)}}}+\beta_{s-1-c}-\beta_{s-1-u}-p_{s-1-u}p_{c+1}\sup{\underline{\mathbb S_{(P,u)}}}
$$
$$
-(1-p_{s-1}-\dots -p_{c+2}-p_{c+1})p_{s-1-u})=P_n\cdot P^{(u)} _{c_1+...+c_n+c-1}(p_{s-1-c}\inf{\overline{\mathbb S_{(P,u)}}}+p_{s-1-u}p_{c+1}(1-\sup{\underline{\mathbb S_{(P,u)}}})
$$
$$
+p_{s-1-u}(p_{c+2}+\dots +p_{s-2}+p_{s-1})+\beta_{s-1-c}-\beta_{s-u})>0
$$
since $c+1\le u$, i.e., $s-c-1\ge s-u$.

Let us consider the difference
$$
\sup \Delta^{(-P,u)} _{c_1...c_nc}-\inf \Delta^{(-P,u)} _{c_1...c_n[c+1]}=P_n\cdot P^{(u)} _{c_1+...+c_n+c-1}(\beta_{s-1-c}+p_{s-1-c}\sup{\overline{\mathbb S_{(P,u)}}}-\beta_{s-1-u}-\beta_{c+1}p_{s-1-u}
$$
$$
-p_{c+1}p_{s-1-u}\inf{\underline{\mathbb S_{(P,u)}}})=P_n\cdot P^{(u)} _{c_1+...+c_n+c-1}(-p_{c+1}p_{s-1-u}\inf{\underline{\mathbb S_{(P,u)}}}-\beta_{s-1-u}-\beta_{c+1}p_{s-1-u}
$$
$$
+(1-p_{s-1}-p_{s-2}-\dots - p_{s-1-c})+p_{s-1-c}\sup{\overline{\mathbb S_{(P,u)}}})
$$
$$
=P_n\cdot P^{(u)} _{c_1+...+c_n+c-1}(-p_{c+1}p_{s-1-u}\inf{\underline{\mathbb S_{(P,u)}}}-p_{s-1-c}(1-\sup{\overline{\mathbb S_{(P,u)}}})+\beta_{s-c}-\beta_{s-1-u}-\beta_{c+1}p_{s-1-u})<0
$$
since $u<c$, i.e., $s-1-u\ge s-c$.

Let us prove the 4th system of inequalities. Let $c_1+c_2+\dots +c_n+c$ be an odd number and $u\in \{s-2,s-1\}$. Then
$$
\sup \Delta^{(-P,u)} _{c_1...c_nc}-\inf \Delta^{(-P,u)} _{c_1...c_n[c+1]}=P_n\cdot P^{(u)} _{c_1+...+c_n+c-1}\times
$$
$$
\times\left(\beta_c+p_c\sup{\underline{\mathbb S_{(P,u)}}}-\beta_u-\beta_{s-c-2}p_u-p_{s-c-2}p_u\inf{\overline{\mathbb S_{(P,u)}}}\right)=P_n\cdot P^{(u)} _{c_1+...+c_n+c-1}\times
$$
$$
\times\left(-\beta_{s-c-2}p_u-p_{s-c-2}p_u\inf{\overline{\mathbb S_{(P,u)}}}-\beta_u+p_c\sup{\underline{\mathbb S_{(P,u)}}}+(1-p_{s-1}-p_{s-2}-\dots -p_{c+1}-p_c)\right)
$$
$$
=P_n\cdot P^{(u)} _{c_1+...+c_n+c-1}\left(-\beta_{s-c-2}p_u-p_{s-c-2}p_u\inf{\overline{\mathbb S_{(P,u)}}}-p_c(1-\sup{\underline{\mathbb S_{(P,u)}}})+\beta_{c+1}-\beta_u\right)<0
$$
since $u\ge c+1$.

 Suppose $c_1+c_2+\dots +c_n+c$ is even. Then 
$$
\inf \Delta^{(-P,u)} _{c_1...c_nc}- \sup \Delta^{(-P,u)} _{c_1...c_n[c+1]}=P_n\cdot P^{(u)} _{c_1+...+c_n+c-1}\times
$$
$$
\times\left(\beta_{s-1-c}+p_{s-1-c}\inf{\overline{\mathbb S_{(P,u)}}}-\beta_{s-1-u}-\beta_{c+1}p_{s-1-u}-p_{s-1-u}p_{c+1}\sup{\underline{\mathbb S_{(P,u)}}}\right)=P_n\cdot P^{(u)} _{c_1+...+c_n+c-1}\times
$$
$$
\times \left(p_{s-1-c}\inf{\overline{\mathbb S_{(P,u)}}}+\beta_{s-1-c}-\beta_{s-1-u}-p_{s-1-u}(1-p_{s-1}-\dots -p_{c+1})-p_{s-1-u}p_{c+1}\sup{\underline{\mathbb S_{(P,u)}}}\right)
$$
$$
=P_n\cdot P^{(u)} _{c_1+...+c_n+c-1}(p_{s-1-c}\inf{\overline{\mathbb S_{(P,u)}}}+\beta_{s-1-c}-\beta_{s-1-u}-p_{s-1-u}+p_{s-1-u}p_{c+1}(1-\sup{\underline{\mathbb S_{(P,u)}}})
$$
$$
+p_{s-1-u}(p_{c+2}+\dots +p_{s-2}+p_{s-1}))>0
$$
since $u+1\ge c$, i.e., $s-u\le s-c+1$, and $\beta_{s-1-c}-\beta_{s-1-u}-p_{s-1-u}=\beta_{s-1-c}-\beta_{s-u}\ge 0$.
\end{proof}

 Let us  prove that \emph{the set $\mathbb  S_{(-P,u)}$ is a  nowhere dense set}.  From the definition, it follows that there exist cylinders $ \Delta^{(-P,u)} _{c_1...c_n}$ of rank $n$ in an arbitrary subinterval of the segment    $I=[\inf\mathbb  S_{(-P,u)},\sup\mathbb  S_{(-P,u)}]$. Since Property 5 from Lemma~\ref{lm: Lemma on cylinders} is true  for these cylinders, we have that for any subinterval of  $ I$ there exists a subinterval such that does not contain points from $\mathbb  S_{(-P,u)}$. So $\mathbb  S_{(-P,u)}$ is a  nowhere dense set.

Let us show that \emph{$\mathbb  S_{(-P,u)}$ is a set of zero Lebesgue measure}. Suppose that $ I^{(-P,u)} _{c_1c_2...c_n} $ is a closed interval whose endpoints coincide with endpoits of the cylinder $ \Delta^{(-P,u)} _{c_1c_2...c_n}$. It is easy to see that 
$$
|I^{(-P,u)} _{c_1c_2...c_n}|=d(\Delta^{(-P,u)} _{c_1c_2...c_n}).
$$
Also, 
$$
 \mathbb  S_{(-P,u)}= \bigcap^{\infty} _{k=1}{S_k},
$$
where
$$
S_1=\bigcup_{c_1\in \overline{A}=A_0\setminus\{u\}}{I^{(-P,u)} _{c_1}},
$$
$$
S_2=\bigcup_{c_1,c_2\in \overline{A}}{I^{(-P,u)} _{c_1c_2}},
$$
$$
\dots\dots\dots\dots\dots\dots\dots
$$
$$
S_k= \bigcup_{c_1,c_2,...,c_k\in \overline{A}}{I^{(-P,u)} _{c_1c_2...c_k}},
$$
$$
\dots\dots\dots\dots\dots\dots\dots
$$
In addition, since $S_{k+1} \subset S_k $, we have 
$$
S_k= S_{k+1} \cup \bar S_{k+1}.
$$

Suppose 
$$
I_{\overline{0}}=\left[\inf{\overline{\mathbb S_{(P,u)}}},\sup{\overline{\mathbb S_{(P,u)}}}\right]
$$
and
$$
I_{\underline{0}}=\left[\inf{\underline{\mathbb S_{(P,u)}}},\sup{\underline{\mathbb S_{(P,u)}}}\right]
$$
are initial closed intervals,  $\lambda(\cdot)$ is the Lebesgue measure of a set.  Then
$$
0<\lambda(S_1)=\sum_{\substack{c_1 \text{is odd}\\ c_1\in \overline{A}}}{\left(\tilde p_{c_1,c_1}\prod^{c_1-1} _{i=1}{\tilde p_{u,i}}\right)}\lambda(I_{\underline{0}})+\sum_{\substack{c_1 \text{is even}\\ c_1\in \overline{A}}}{\left(\tilde p_{c_1,c_1}\prod^{c_1-1} _{i=1}{\tilde p_{u,i}}\right)}\lambda(I_{\overline{0}}).
$$
It is easy to see that 
$$
0<\lambda(S_1)\le \sum_{ c_1\in \overline{A}}{\left(\tilde p_{c_1,c_1}\prod^{c_1-1} _{i=1}{\tilde p_{u,i}}\right)}\max{\left\{\lambda(I_{\overline{0}}), \lambda(I_{\underline{0}})\right\}}<1
$$
since 
$$
\lambda\left(\bigcup_{c_1,...,c_n\in A=\{0,1,...,s-1\}}{\Delta^P _{c_1c_2...c_n}}\right)=1. 
$$
Also, one can denote
$$
v_{c_1}=\sum_{ c_1\in \overline{A}}{\left(\tilde p_{c_1,c_1}\prod^{c_1-1} _{i=1}{\tilde p_{u,i}}\right)}<1=p_0+p_1+\dots +p_{s-1}.
$$

In the second step, we get
\begin{equation*}
\begin{split}
0<\lambda(S_2)&=\sum_{\substack{c_1+c_2 \text{is odd}\\ c_1,c_2\in \overline{A}}}{\left(\tilde p_{c_1,c_1}\tilde p_{c_2,c_1+c_2}\prod _{\substack{i=\overline{1, c_1+c_2-1}\\ i\ne c_1}}{\tilde p_{u,i}}\right)}\lambda(I_{\underline{0}})\\
&+\sum_{\substack{c_1+c_2 \text{is even}\\ c_1,c_2\in \overline{A}}}{\left(\tilde p_{c_1,c_1}\tilde p_{c_2,c_1+c_2}\prod _{\substack{i=\overline{1, c_1+c_2-1}\\ i\ne c_1}}{\tilde p_{u,i}}\right)}\lambda(I_{\overline{0}})\\
&\le\sum_{ c_1,c_2\in \overline{A}}{\left(\tilde p_{c_1,c_1}\tilde p_{c_2,c_1+c_2}\prod _{\substack{i=\overline{1, c_1+c_2-1}\\ i\ne c_1}}{\tilde p_{u,i}}\right)}\max{\left\{\lambda(I_{\overline{0}}), \lambda(I_{\underline{0}})\right\}}\\
&\le \max{\left\{\lambda(I_{\overline{0}}), \lambda(I_{\underline{0}})\right\}}\left(\max{\left\{v_{c_1}, v_{c_2}\right\}}\right)^2,
\end{split}
\end{equation*}
where
$$
v_{c_2}=\sum_{ c_2\in \overline{A}}{\left(\tilde p_{c_2,c_1+c_2}\prod^{c_1+c_2-1} _{i=c_1+1}{\tilde p_{u,i}}\right)}<1.
$$

In the $n$th step, we have
\begin{equation*}
\begin{split}
0<\lambda(S_n)&=\sum_{\substack{c_1+c_2...+c_n \text{is odd}\\ c_1,c_2,...,c_n\in \overline{A}}}{\left(\left(\prod^{n} _{j=1}{\tilde p_{c_j, c_1+c_2+...+c_j}}\right)\prod _{\substack{i=\overline{1, c_1+c_2+...+c_n-1}\\ i\ne C_{n-1}}}{\tilde p_{u,i}}\right)}\lambda(I_{\underline{0}})\\
&+\sum_{\substack{c_1+c_2...+c_n \text{is even}\\ c_1,c_2,...,c_n\in \overline{A}}}{\left(\left(\prod^{n} _{j=1}{\tilde p_{c_j, c_1+c_2+...+c_j}}\right)\prod _{\substack{i=\overline{1, c_1+c_2+...+c_n-1}\\ i\ne C_{n-1}}}{\tilde p_{u,i}}\right)}\lambda(I_{\overline{0}})\\
&\le\sum_{c_1,c_2,...,c_n\in \overline{A}}{\left(\left(\prod^{n} _{j=1}{\tilde p_{c_j, c_1+c_2+...+c_j}}\right)\prod _{\substack{i=\overline{1, c_1+c_2+...+c_n-1}\\ i\ne C_{n-1}}}{\tilde p_{u,i}}\right)}\max{\left\{\lambda(I_{\overline{0}}), \lambda(I_{\underline{0}})\right\}}\\
&\le \max{\left\{\lambda(I_{\overline{0}}), \lambda(I_{\underline{0}})\right\}}\left(\max_{k=\overline{1,n}}{\left\{\sum_{ c_k\in \overline{A}}{\left(\tilde p_{c_k,c_1+c_2+...+c_k}\prod^{c_1+c_2+...+c_k-1} _{i=c_1+c_2+...+c_{k-1}+1}{\tilde p_{u,i}}\right)}\right\}}\right)^n\\
&=\max{\left\{\lambda(I_{\overline{0}}), \lambda(I_{\underline{0}})\right\}}\left(\max_{k=\overline{1,n}}{\left\{v_{c_k}\right\}}\right)^n<1.\\
\end{split}
\end{equation*}
Here $c_{k-1}=0$ for $k=1$.

So,
$$
\lim_{n\to\infty}{\lambda(S_n)}\le \lim_{n\to\infty}{\left(\max{\left\{\lambda(I_{\overline{0}}), \lambda(I_{\underline{0}})\right\}}\left(\max_{k=\overline{1,n}}{\left\{v_{c_k}\right\}}\right)^n\right)}=0.
$$
Hence the set $\mathbb  S_{(-P,u)}$  is a set of zero Lebesgue measure. 

Let us prove that \emph{$\mathbb  S_{(-P,u)}$  is a perfect set}. Since 
$$ 
S_k= \bigcup_{c_1,c_2,...,c_k\in \overline{A}}{I^{(-P,u)}  _{c_1c_2...c_k}}
$$
 is a closed set ($S_k$ is a union of segments), we see that 
$$
 \mathbb  S_{(-P,u)}= \bigcap^{\infty} _{k=1} S_k
$$
is a closed set. 

Suppose $ x \in \mathbb  S_{(-P,u)} $,    $ R$  is any interval  containing $ x $, and $ J_n $ is a segment of  $ S_n $ such that contains $ x $. Let us choose a number $ n $ such that $  J_n \subset R $. Suppose that $ x_n $ is the endpoint of $ J_n $ such that the condition 
$ x_n \ne x $ holds. Hence $ x_n \in \mathbb  S_{(-P,u)} $ and $  x $ is a limit point of the set. 

Since $\mathbb  S_{(-P,u)}$ is a closed set and does not contain isolated points, we obtain that $\mathbb  S_{(-P,u)}$ is a perfect set.

\section{Proof of Theorem~\ref{th: the second main theorem}}

Since 
$ \mathbb S_{(-P,u)} \subset I_{\overline{0}}$ and $ \mathbb S_{(-P,u)}$ is a perfect set, we obtain that  $\mathbb S_{(-P,u)}$ is a compact set.
In addition, 
$$
\frac{\Delta^{(-P,u)} _{c_1c_2...c_{n-1}c_n}}{\Delta^{(-P,u)} _{c_1c_2...c_{n-1}}}=\begin{cases}
\omega_{1,c_n}=\underbrace{p_{s-1-u}p_u\ldots p_{s-1-u}p_u}_{c_n-1}p_{s-1-c_n}\frac{d\left(\overline{\mathbb S_{(P,u)}}\right)}{d\left(\underline{\mathbb S_{(P,u)}}\right)}&\text{if  $c_1+\dots +c_{n-1}$ is odd, $c_n$ is odd}\\
\omega_{2,c_n}=\underbrace{p_up_{s-1-u}\ldots p_up_{s-1-u}}_{c_n-1}p_{c_n}\frac{d\left(\underline{\mathbb S_{(P,u)}}\right)}{d\left(\overline{\mathbb S_{(P,u)}}\right)}&\text{if  $c_1+\dots +c_{n-1}$ is even, $c_n$ is odd}\\
\omega_{3,c_n}=\underbrace{p_{s-1-u}p_u\ldots p_{s-1-u}p_up_{s-1-u}}_{c_n-1}p_{c_n}&\text{if  $c_1+\dots +c_{n-1}$ is odd, $c_n$ is even}\\
\omega_{4,c_n}=\underbrace{p_up_{s-1-u}\ldots p_up_{s-1-u}p_{u}}_{c_n-1}p_{s-1-c_n}&\text{if  $c_1+\dots +c_{n-1}$ is even, $c_n$ is even}
\end{cases}
$$
and
$$
\mathbb S_{(-P,u)}=\bigcap^{\infty} _{n=1}{\bigcup_{c_1,\dots , c_n\in \overline{A}}{\Delta^{(-P,u)} _{c_1c_2\ldots c_n}}}. 
$$

Suppose $l$ is the number of odd numbers in the set  $\overline{A}=\{0,1, \dots , s-1\}\setminus\{0,u\}$ and $m$ is the number of even numbers in $\overline{A}$. We have $(l+m)^n$ cylinders $\Delta^{(-P,u)} _{c_1c_2...c_{n-1}c_n}$.

Let $\check N_{j,n}$ ($j=\overline{1,4}, 1<n\in\mathbb N$) be the summary number  of cylinders $\Delta^{(-P,u)} _{c_1c_2...c_n}$ for which
$$
\frac{d\left(\Delta^{(-P,u)} _{c_1c_2...c_{n-1}c_n}\right)}{d\left(\Delta^{(-P,u)} _{c_1c_2...c_{n-1}}\right)}=\omega_{j,c_n}.
$$

\emph{In the first step}, we have $\check N_{2,1}=l$ (the unique cylinder for any odd $c_1\in \overline{A}$) and $\check N_{4,1}=m$ (the unique cylinder for any even $c_1\in \overline{A}$) because for $n=0$ we get $I_{\overline{0}}\cap \overline{\mathbb S_{(P,u)}}$. 

\emph{In the second step}, we have:
\begin{itemize}
\item $\check N_{1,2}=l^2$ ($l$ cylinders for any odd $c_2\in \overline{A}$); 
\item $\check N_{2,2}=lm$ ($m$ cylinders for any odd $c_2\in \overline{A}$);
\item $\check N_{3,2}=lm$ ($l$ cylinders for any even $c_2\in \overline{A}$);
\item $\check N_{4,2}=m^2$ ($m$ cylinders for any even $c_2\in \overline{A}$).
\end{itemize}

\emph{In the third step}, we have:
\begin{itemize}
\item $\check N_{1,3}=2l^2m$ ($2lm$ cylinders for any odd $c_3\in \overline{A}$); 
\item $\check N_{2,3}=l(l^2+m^2)$ ($l^2+m^2$ cylinders for any odd $c_3\in \overline{A}$);
\item $\check N_{3,3}=2lm^2$ ($2lm$ cylinders for any even $c_3\in \overline{A}$);
\item $\check N_{4,3}=m(l^2+m^2)$ ($l^2+m^2$ cylinders for any even $c_3\in \overline{A}$).
\end{itemize}

\emph{In the fourth step}, we have:
\begin{itemize}
\item $\check N_{1,4}=l^4+3l^2m^2$ ($l^3+3lm^2$ cylinders for any odd $c_4\in \overline{A}$); 
\item $\check N_{2,4}=lm^3+3l^3m$ ($m^3+3l^2m$ cylinders for any odd $c_4\in \overline{A}$);
\item $\check N_{3,4}=l^3m+3lm^3$ ($l^3+3lm^2$ cylinders for any even $c_4\in \overline{A}$);
\item $\check N_{4,4}=m^4+3l^2m^2$ ($m^3+3l^2m$ cylinders for any even $c_4\in \overline{A}$).
\end{itemize}

Let us remark that, in the general case, values of $\omega_{j,c_n}$ are different for  the unique $j$ but different $c_n\in\overline{A}.$ Hence the Hausdorff dimension of our set depends on the numbers of $\omega_{j,c_n}$  for all different  $c_n\in\overline{A}$. 

Using arguments described in \cite{HRW2000} and auxiliary Theorems~\ref{th: auxiliary-1}--\ref{th: auxiliary-4}, this completes the proof.

\end{document}